\pdfoutput=1
\documentclass{amsart}
\usepackage{amsmath}
\usepackage{graphicx}

\newcommand{\bigcircle}{\begin{picture}(10,10)
\put(5,2){\circle{10}}
\end{picture}
}

\newcommand{\x}{~\begin{picture}(10,15)
\put(5,4){\circle{15}}
\put(0,-1){\vector(1,1){10}}
\put(10,-1){\vector(-1,1){10}}
\end{picture}~
}

\newcommand{\dxu}{~
\begin{picture}(10,15)
\put(5,11.5){\circle*{3}}
\put(5,4){\circle{15}}
\put(0,-1){\vector(1,1){10}}
\put(10,-1){\vector(-1,1){10}}
\end{picture}~
}

\newcommand{\dxl}{~
\begin{picture}(10,15)
\put(5,4){\circle{15}}
\put(10,-1){\vector(-1,1){10}}
\put(0,-1){\vector(1,1){10}}
\put(-2,4){\circle*{3}}
\end{picture}~
}
\newcommand{\dxr}{~
\begin{picture}(10,15)
\put(5,4){\circle{15}}
\put(0,-1){\vector(1,1){10}}
\put(10,-1){\vector(-1,1){10}}
\put(12,4){\circle*{3}}
\end{picture}~
}
\newcommand{\dxb}{~
\begin{picture}(10,15)
\put(5,4){\circle{15}}
\put(0,-1){\vector(1,1){10}}
\put(10,-1){\vector(-1,1){10}}
\put(5,-3.5){\circle*{3}}
\end{picture}~
}

\theoremstyle{plain}
\newtheorem{theorem}{Theorem}
\newtheorem{lemma}{Lemma}
\newtheorem{proposition}{Proposition}
\newtheorem{corollary}{Corollary}

\theoremstyle{definition}
\newtheorem{notation}{Notation}
\newtheorem{definition}{Definition}

\theoremstyle{remark}
\newtheorem{remark}{Remark}

\begin{document}
\title{Based chord diagrams of spherical curves
}
\author{Noboru Ito}
\address{Graduate School of Mathematical Sciences, ICMS iBMath, The University of Tokyo, 3-8-1, Komaba, Meguro-ku, Tokyo 153-8914, Japan.}
\keywords{spherical curves; chord diagrams; knot projections}
\thanks{MSC2010: 57M25; 57Q35\\
The work was partly supported by a Waseda University Grant for Special Research Projects (Project number: 2014K-6292) and The JSPS Japanese-German Graduate Externship.  The author was a project researcher of Grant-in-Aid for Scientific Research (S) (No.~24224002) (2016.4--2017.3).  The work was partly supported by Sumitomo Foundation (Grant for Basic Science Research Projects, Project number: 160556).}
\email{noboru@ms.u-tokyo.ac.jp}
\maketitle

\begin{abstract}
This paper demonstrates an approach for developing a framework to produce invariants of base-point-free generic spherical curves under some chosen local moves from Reidemeister moves using based chord diagrams.  Our invariants not only contain Arnold's classical generic spherical curve invariant but also new invariants.      
\end{abstract}
\section{Introduction}
A {\it{spherical curve}} is the image of a generic immersion of a circle into a two-dimensional sphere.  
Any two spherical curves are related by a finite sequence consisting of three types of local replacements, namely, {\it{Reidemeister moves}} RI, RI\!I, and RI\!I\!I (Fig.\ \ref{f1}).  
\begin{figure}[h!]
\centering
\includegraphics[width=8cm]{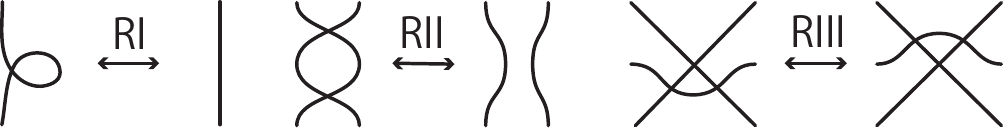}
\caption{Reidemeister moves.}\label{f1}
\end{figure}
Moves RI\!I and RI\!I\!I can be decomposed into two types of moves: strong and weak moves (Fig.\ \ref{f2}).    
\begin{figure}[h!]
\centering
\includegraphics[width=12cm]{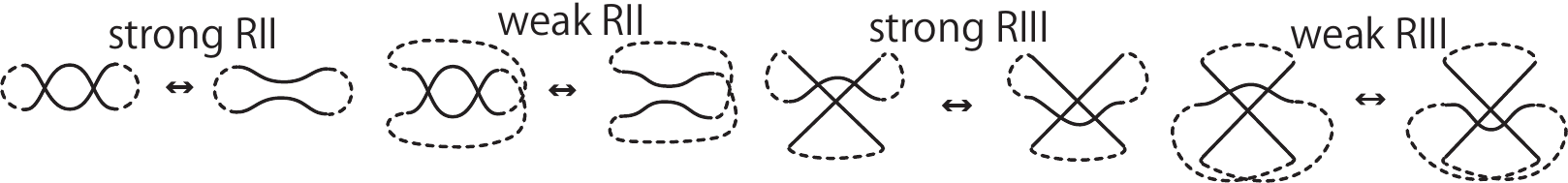}
\caption{Reidemeister moves strong RI\!I, weak RI\!I, strong RI\!I\!I, and weak RI\!I\!I.  Dotted arcs indicate the connections of the branches.}\label{f2}
\end{figure}

A {\it{chord diagram of a spherical curve}} is the immersing oriented circle on which the preimages of all the double points are placed and connected by a chord (Definition~\ref{d2}).  
For each double point of a spherical curve, a unique replacement can be obtained by orienting the spherical curve arbitrarily, as shown in Fig.\ \ref{f3}.   
Here, this replacement does not depend on the orientation of the spherical curve.
\begin{figure}[h!]
\centering
\includegraphics[width=3cm]{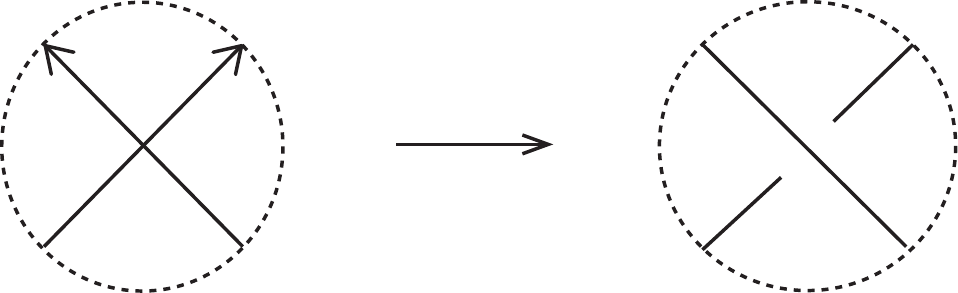}
\caption{Resolution from an oriented double point to an unoriented crossing.}\label{f3}
\end{figure} 
A spherical curve with over- /under- information of every double point is said to be a {\it{knot diagram}}.  A double point of a knot diagram is called a {\it{crossing}}.  A crossing as shown in the right figure of Fig.\ \ref{f3} is called a {\it{negative crossing}}.    
The chord diagram of every knot diagram consists of oriented chords; this is known as 
 an {\it{arrow diagram}}, previously introduced by Polyak and Viro \cite{PV} (see also \cite{polyak}).  An oriented chord is called an {\it{arrow}}.  
\begin{figure}[h!]
\centering
\includegraphics[width=10cm]{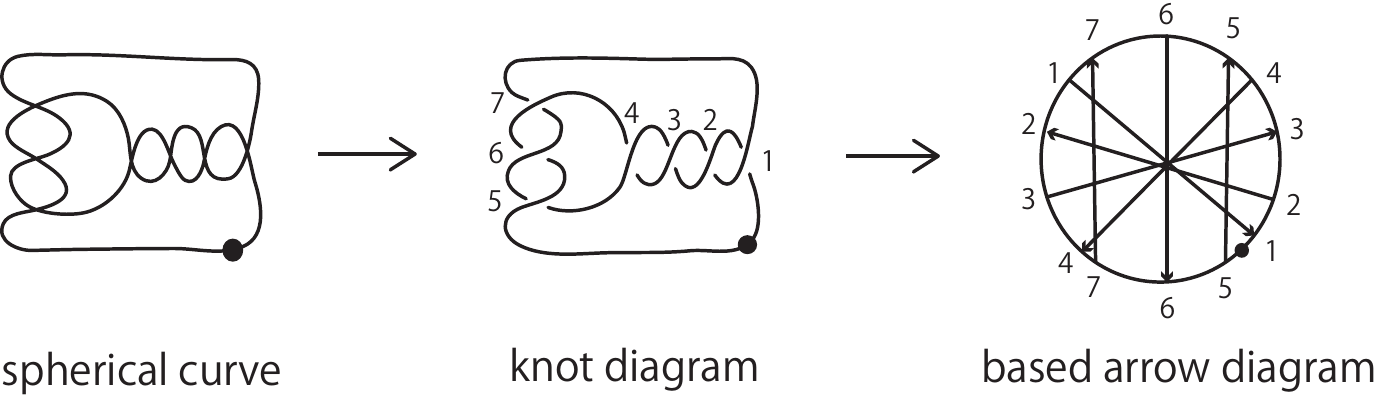}
\caption{Spherical curve with a base point, its knot diagram with a base point, and its based arrow diagram whose circle is oriented counterclockwise.}\label{f4}
\end{figure}

An arrow diagram with a base point is called a {\it{based arrow diagram}} (Definition~\ref{d3}).  Fig.\ \ref{f4} describes the process for obtaining a based arrow diagram from a spherical curve with a base point.  
As shown in Fig.\ \ref{f4}, we select an arbitrary base point on the spherical curve that does not coincide with any of the double points.  

We are interested in such based arrow diagrams for the following reasons.  If we count the number of sub-arrow diagrams of type $\x$ embedded into the entire (based) arrow diagram of a spherical curve $P$, the result is an integer determined by $P$, denoted as $\x(P)$.  We can see that $\x(P)$ (mod $3$) is a $\mathbb{Z}/3\mathbb{Z}$-valued invariant under RI and strong RI\!I\!I \cite{ITT} and $\x(P)$ (mod $4$) is a $\mathbb{Z}/4\mathbb{Z}$-valued invariant under RI and strong RI\!I \cite{IT_some_ch}.   Here, when we consider invariants under certain Reidemeister moves, as a $\mathbb{Z}$-valued quantity, $\x(P)$ produces only an invariant under RI.  

By contrast, sub-based arrow diagrams can be used to define a greater number of invariants of spherical curves without a base point than a single invariant $\x(P)$ (Theorem~\ref{thm1}).  

The following is the plan of this paper.  Sec.\ \ref{sec1h} contains definitions, notations and one of the main results.    
Sec.\ \ref{sec2} presents the proof of Theorem~\ref{thm1} and Sec.\ \ref{sec3} explains the framework that produces these invariants.  Sec.\ \ref{secArnold} shows that one of the invariants in Theorem~\ref{thm1} is equal to a linear combination of Arnold's invariants.  {Sec.~\ref{sec4}} introduces a new invariant under RI and weak RI\!I because an invariant under RI and weak RI\!I is not covered by Theorem~\ref{thm1}.    
Finally, in Sec.\ \ref{secTable}, we construct tables from the values of these invariants for prime knot projections without $1$-gons up to seven double points.

\section{Definitions, notations, and main results}\label{sec1h}
\begin{definition}[oriented Gauss word]\label{d1}
Let $\hat{n}=\{1, 2, \dots, n \}$.  A \emph{word} $w$ of length $n$ is a map from $\hat{n}$ to $\mathbb{N}$ and each element of $w(\hat{n})$ is called a \emph{letter}.  Traditionally, the word is represented by $w(1)w(2) \cdots w(n)$.   A \emph{Gauss word} of length $2n$ is a word $w$ of length $2n$ such that each letter appears exactly twice in $w(1)w(2) \cdots w(2n)$.   
For a given Gauss word $w$ and for each letter $k$, we distinguish the two $k$'s in $w$ by calling one $k$ a \emph{head} and the other \emph{tail}.  The assignments are expressed by adding extra information to $w$ $=$ $w(1)w(2) \cdots w(2n)$, that is, we add ``$\bar{~}$" on the letters which are assigned tails.  This new word $w^*$ is called an \emph{oriented Gauss word}.   We call each letter of an oriented Gauss word an \emph{oriented letter}.  
Without loss of generality, we may suppose that the set of the letters in $w(\hat{2n})$ is $\{ 1, 2, \ldots, n \}$.  Clearly, the set of oriented letters of the word $w^*$ of length $2n$, denoted by $w^*(\hat{2n})$, is $\{ 1, 2, \ldots n, \bar{1}, \bar{2}, \ldots, \bar{n} \}$.  Let $v^*$ be another oriented Gauss word that is induced from $v$.    
Two oriented Gauss words, $v^*$ and $w^*$, of length $2n$ are \emph{isomorphic} if there exists a bijection $f : w(\hat{2n}) \to v(\hat{2n})$ such that $v^*$ $=$ $f^* \circ w^*$ where $f^* : w^{*}(\hat{2n})$ $= \{ 1, 2, \ldots, n, \bar{1}, \bar{2}, \ldots, \bar{n} \}$ $\to$ $v^*(\hat{2n})$ is the bijection such that $f^*(i)$ $=$ $f(i)$ and $f^*(\bar{i})$ $=$ $\overline{f(i)}$ ($i$ $=$ $1, 2, \ldots, n$).  Isomorphism induces an equivalence relation on oriented Gauss words.  For an oriented Gauss word $w$ of length $2n$, the equivalence class containing $w^*$ is denoted by $[w]$.  
\end{definition}
\begin{definition}[chord diagram, arrow diagram, and based arrow diagram]\label{d2}
A \emph{chord diagram} is a configuration of $2n$ paired points on an oriented circle.   The two points of each pair are usually connected by a straight arc, called a \emph{chord}.  An \emph{arrow diagram} is a chord diagram such that each pair of points consists of a starting point and an end point and the circle is oriented counterclockwise.  The orientation of each chord is represented by an arrow from the tail to the head.  Each oriented chord is called an \emph{arrow}.   A \emph{based arrow diagram} is an arrow diagram with a base point on the circle where the base point does not coincide with one of the paired points.   
\end{definition}
Note that we have one to one correspondence between the equivalence classes of Gauss words of length $2n$ and the based arrow diagrams, each of which has $n$ arrows, as shown in Fig.\ \ref{3h}.  In the rest of this paper, we identify these four expressions in Fig.\ \ref{3h}, and freely use this identification.  
\begin{figure}[h!]
\includegraphics[width=10cm]{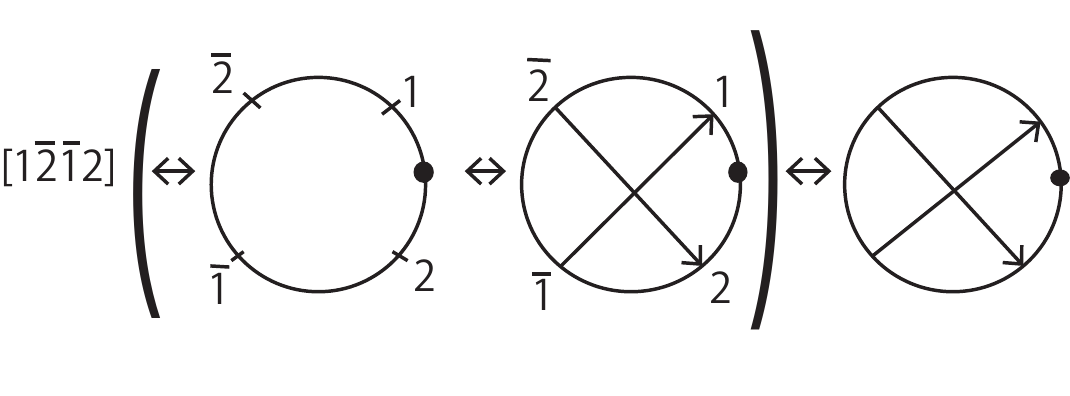}
\caption{Four expressions.}\label{3h}
\end{figure}
\begin{definition}[a based arrow diagram $AD_P$ of a spherical curve $P$]\label{d3}
Let $P$ be a spherical curve.  By definition, there exists a generic immersion $g : S^1$ $\to$ $S^2$ such that $g(S^1)=P$.  We define a base chord diagram of $P$ (e.g., Fig.\ \ref{f4}) as follows.  Let $l$ be the number of the double points of $P$.  Fix a base point, which does not coincide with a double point, and choose an orientation of $P$.  The spherical curve with the orientation and the base point is denoted by $\dot{P}^+$ and the oriented spherical curve $P$ with the opposite orientation having the base point is denoted by $\dot{P}^-$.  

Starting at the base point, proceed along $\dot{P}^+$ according to the orientation of $\dot{P}^+$.  To begin with, we assign integer $1$ to the first double point that we encounter.  Then we assign integer $2$ to the next double point that we encounter provided it is not the first double point which has been already assigned integer $1$.    If we have already assigned $1$, $2$, \ldots, $p$ and we encounter the next double point that has not assigned yet, then we assign $p+1$ to it.  Following the same procedure, we finish to label all the double points of $\dot{P}^+$.   
By definition, $g^{-1}(i)$ consists of two points on $S^1$ and we shall assign $i$ to them.  
Then the chord diagram with a base point is represented by $g^{-1}($the base point on $P$), $g^{-1}($double point assigned $1)$, $g^{-1}($double point assigned $2)$, \ldots, $g^{-1}($double point assigned $l)$ on a circle.   
Then, $g^{-1}($the base point on $P$) on $S^1$ is called the \emph{base point} of a chord diagram.       

Next, we consider the knot diagram obtained from $\dot{P}^+$ by replacing every double point with a negative crossing with respect to the orientation, as shown in Fig.\ \ref{f3}.    We assign an orientation to each chord where the head corresponding to the under path.  Then, this based arrow diagram is denoted by $AD_{\dot{P}^+}$ and is called a \emph{based arrow diagram} of $P$.  
\end{definition}
Note that by definition, it is easy to show that the based arrow diagram $AD_{\dot{P}^+}$ is the reflection image of $AD_{\dot{P}^-}$.   Note also that the based arrow diagram $AD_{\dot{P}^{\epsilon}}$ gives an equivalence class of oriented Gauss words, say $[v_{\dot{P}^\epsilon}]$.  Then, by the definition of the equivalence relation, it is elementary to show that the map $\dot{P}^\epsilon$ $\mapsto$ $[v_{\dot{P}^\epsilon}]$ is well-defined.  
\begin{notation}[$x(P)$]\label{n1}
 Let $x$ be a based arrow diagram.  For a given spherical curve $P$, we choose and fix a base point and an orientation of $P$, i.e., we obtain $\dot{P}^\epsilon$ ($\epsilon = +$ or $-$).  We fix an oriented Gauss word $G$ isomorphic to $w_{\dot{P}^\epsilon}$.   
 We consider the set $\{ G' ~|~$ $G'$ is obtained from $G$ by ignoring some pairs of oriented letters $\}$.  Then, we consider the subset of this set consisting of the elements, each of which is isomorphic to $x$.   We denote the cardinality of this subset by $x(G)$.  Suppose that $H$ is another oriented Gauss word isomorphic to $w_{\dot{P}^\epsilon}$.  By definition, $x(G)=x(H)$.  Thus, this number, determined by $[w_{\dot{P}^\epsilon}]$, is denoted by $x(\dot{P}^\epsilon)$.  If $P$ is oriented and has a base point, i.e., $P$ $=$ $\dot{P}^{\epsilon}$ but $x(\dot{P}^\epsilon)$ does not depend on the choice of the base point and the orientation, we may write $x(P)$ to represent $x(\dot{P}^\epsilon)$.   
 \end{notation}
 \begin{definition}[connected sum, additivity]\label{d4}
Let $P_i$ be spherical curve ($i=1, 2$).  Suppose that the ambient $2$-spheres are oriented.  
Let $p_i$ be a point on $P_i$ such that $p_i$ does not coincide with a double point ($i=1, 2$).  Let $d_i$ be a sufficiently small disk with center $p_i$ ($i=1, 2$) where $d_i \cap P_i$ consists of an arc properly embedded in $d_i$.  Let $\hat{d}_i$ $=$ $cl(S^2 \setminus d_i)$ and $\hat{P}_i$ $=$ $P_i \cap \hat{d}_i$.  Let $h :$ $\partial \hat{d}_i \to \partial \hat{d}_2$ be a homeomorphism such that $h(\partial \hat{P}_1)$ $=$ $\partial \hat{P}_2$.  Then, $\hat{P}_1 \cup_h \hat{P}_2$ obtains a spherical curve in the $2$-sphere $\hat{d}_1 \cup_h \hat{d}_2$.  The spherical curve $\hat{P}_1 \cup_h \hat{P}_2$ in the $2$-sphere is denoted by $P_1 \sharp_{(p_1, p_2, h)} P_2$ and is called a \emph{connected sum} of the spherical curves $P_1$ and $P_2$ at the pair ($p_1, p_2$) (see Fig.\ \ref{f4a}).  Let $I$ be a function on the set of the spherical curves.  We say that $I$ is \emph{additive} if $I(P_1 \sharp_{(p_1, p_2, h)} P_2)$ $=$ $I(P_1)$ $+$ $I(P_2)$ for any spherical curves $P_1$ and $P_2$, for any pair ($p_1, p_2$) consisting of points, and for any $h$.         
 \end{definition}
\begin{figure}[h!]
\centering
\includegraphics[width=12cm]{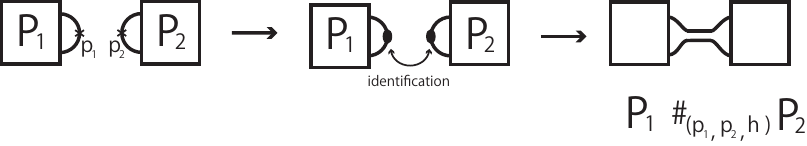}
\caption{A connected sum of two spherical curves $P_1$ and $P_2$.}\label{f4a}
\end{figure}
 
We have Theorem~\ref{thm1} as follows.
\begin{theorem}\label{thm1}
Let $P$ be a spherical curve and let $\dot{P}^\epsilon$ be $P$ with a base point and an orientation.  The integers $\dxu(\dot{P}^\epsilon)$, $\dxb(\dot{P}^\epsilon)$, and $\dxl(\dot{P}^\epsilon) + \dxr(\dot{P}^\epsilon)$ do not depend on the choice of the base point and the orientation.  Hence, these integers can be denoted by $\dxu(P)$, $\dxl(P) + \dxr(P)$, and $\dxb(P)$, respectively.  
Further, 
\[\dxl(P)+\dxr(P)-\dxb(P)\] is invariant under RI and strong RI\!I\!I,
\[\dxu(P) - \dxl(P) - \dxr(P) + \dxb(P)\] is invariant under RI and strong RI\!I, and 
\[\dxu(P) ~( = \dxb(P))\] is invariant under RI and weak RI\!I\!I.  
All these invariants are additive and non-trivial. 
\end{theorem}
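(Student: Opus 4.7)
The plan is to prove the theorem in three stages: (i) establish that the four counts assemble into quantities independent of the base point and orientation; (ii) verify invariance under each indicated Reidemeister move by a local analysis; and (iii) settle additivity and non-triviality.

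First, for the choice of base point I would use that $\x(\dot{P}^\epsilon) = \dxu(\dot{P}^\epsilon) + \dxl(\dot{P}^\epsilon) + \dxr(\dot{P}^\epsilon) + \dxb(\dot{P}^\epsilon)$ is already known to be base-point independent (since $\x(P)$ is an RI invariant only as a $\mathbb{Z}$-valued quantity, but base-point independence is automatic from Notation~\ref{n1}). Sliding the base point across one endpoint of an arrow $a$ can only redistribute mass among the four counts; moreover, a redistribution affects the pair $\{a,b\}$ only when $a$ and $b$ cross. Enumerating the four local transitions (head/tail of $a$, head/tail of $b$) shows that passing an arrow head produces only exchanges $\dxu \leftrightarrow \dxl$ or $\dxu \leftrightarrow \dxr$, while passing a tail produces only $\dxb \leftrightarrow \dxl$ or $\dxb \leftrightarrow \dxr$, so the three combinations $\dxu$, $\dxb$, and $\dxl+\dxr$ are each preserved. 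Independence under orientation reversal follows because $AD_{\dot{P}^+}$ is the reflection of $AD_{\dot{P}^-}$: the reflection axis fixes the top and bottom of the crossing symbol, preserving $\dxu$ and $\dxb$, and swaps left with right, preserving $\dxl+\dxr$.

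Second, for Reidemeister invariance I would work one move at a time. An RI move inserts or deletes a single chord and therefore cannot affect any of $\dxu,\dxl,\dxr,\dxb$ individually, since each target sub-diagram has two crossing chords. For strong RI\!I, the two new chords are introduced with endpoints interleaved in a parallel (non-crossing) configuration, so they contribute no new sub-$\x$ pattern among themselves; the only new contributions come from pairing each new chord with a pre-existing chord whose endpoints straddle the local patch. A case analysis by how that pre-existing chord enters and leaves the patch, and by where the base point lies (inside or outside the patch, and in which of the four arcs), gives the cancellation needed for $\dxu - \dxl - \dxr + \dxb$. The analogous local enumerations for strong RI\!I\!I and weak RI\!I\!I yield, respectively, the invariance of $\dxl+\dxr-\dxb$ and of $\dxu=\dxb$. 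The parenthetical identity $\dxu(P)=\dxb(P)$ can be read off directly once one verifies that under every allowed move the two sides change in lockstep, or can be established globally by a parity count once the other two invariants are known.

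Third, additivity follows at once from the structure of a connected sum: in $AD_{P_1 \sharp_{(p_1,p_2,h)} P_2}$ the chords split into two groups (one per summand) whose supporting arcs are disjoint on the circle, so no chord from $P_1$ crosses any chord from $P_2$, and every sub-$\x$ pattern lies entirely inside one summand. Choosing the base point at the connect-sum site preserves its position within each summand. Non-triviality is dispatched by exhibiting small explicit spherical curves (e.g.\ the trefoil-type projection and a few further examples, as in Sec.~\ref{secTable}) on which each of the three invariants attains a non-zero value, with no two examples distinguished by only one of them. The main obstacle is the combinatorial case enumeration for the strong RI\!I, strong RI\!I\!I, and weak RI\!I\!I moves: each comes with several sub-configurations determined by the orientations of the strands in the patch, by which auxiliary chord one pairs with each local chord, and by the position of the base point relative to the patch. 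Organising this bookkeeping so that the signed combinations cancel cleanly is the technical heart of the argument.
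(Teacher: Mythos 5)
Your overall strategy --- tracking how the four dotted counts change as the base point slides past a chord endpoint, and then doing a local count for each Reidemeister move --- is the same as the paper's. But there is a genuine gap at the central step. You write that enumerating the transitions shows that passing an arrow head produces only exchanges $\dxu \leftrightarrow \dxl$ or $\dxu \leftrightarrow \dxr$, ``so the three combinations $\dxu$, $\dxb$, and $\dxl+\dxr$ are each preserved.'' That conclusion does not follow. When the base point passes the head of an arrow $A$, the pairs $(a,A)$ with $a$ crossing $A$ split into two families: for those with one cyclic order of endpoints the transition is (top)\,$\to$\,(side), and for the other cyclic order it is (side)\,$\to$\,(top). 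Hence $\dxu$ restricted to pairs containing $A$ loses one family and gains the other, and it is preserved only if the two families have the same cardinality --- equivalently, only if the number of arrows crossing $A$ from left to right equals the number crossing it from right to left. This is exactly Lemma~\ref{lemA} of the paper, proved by smoothing the double point corresponding to $A$ and using that the two resulting closed curves on $S^2$ meet in an even number of points. It is not a formal property of arrow diagrams (it fails for non-realizable Gauss words), so it cannot be obtained by the purely combinatorial enumeration you describe; the same counting fact is also what makes the cancellations work in the strong RI\!I analysis (the equal multiplicities $m$ in Table~\ref{t4}). Your proposal never supplies this geometric input.

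A second, smaller gap concerns $\dxu(P)=\dxb(P)$. Verifying that the two quantities ``change in lockstep under every allowed move'' (RI and weak RI\!I\!I) only shows that $\dxu-\dxb$ is invariant under those moves; since not every spherical curve is reachable from the circle by RI and weak RI\!I\!I alone, this does not yield equality. The paper instead shows that $\dxu-\dxb$ is unchanged under RI, both RI\!I\!I's, and strong RI\!I (the last again via Lemma~\ref{lemA}), observes that weak RI\!I is generated by these, concludes that $\dxu-\dxb$ is a full Reidemeister invariant and hence constant, and evaluates it on the simple closed curve. Your alternative ``parity count'' is not specified enough to substitute for this. The remaining parts of your outline (RI, orientation reversal via reflection, additivity, non-triviality by examples) agree with the paper and are fine.
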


\section{Proof of Theorem~\ref{thm1}}\label{sec2}
\begin{proof}    
\noindent({\bf{Independence of double points.}})   

To begin with, we show that $\dxu(\dot{P}^\epsilon)$, $\dxb(\dot{P}^\epsilon)$, and $\dxl(\dot{P}^\epsilon)+\dxr(\dot{P}^\epsilon)$ of a spherical curve $P$ are independent of the base points.  It is sufficient to consider two cases to move the base point over an arrow $A$ (Fig.\ \ref{f5}).
\begin{figure}[h!]
\centering
\includegraphics[width=5cm]{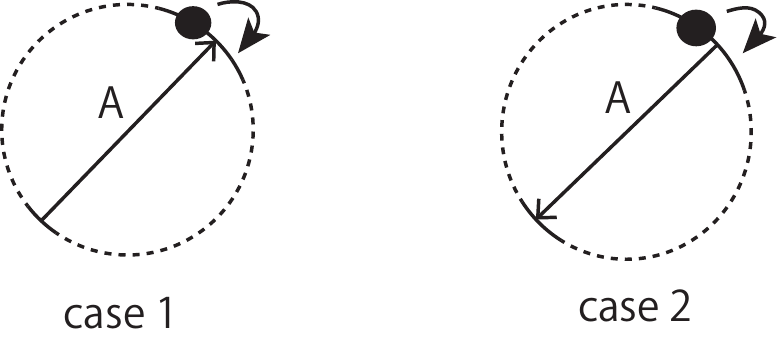}
\caption{Cases 1 and 2.}\label{f5}
\end{figure}
Before we consider the two cases, we present Lemma~\ref{lemA}.
\begin{lemma}\label{lemA}
Let $X$ be an arbitrary arrow of a based arrow diagram of an oriented spherical curve with a base point, as shown in Fig.\ \ref{f6}.  Consider the case where we move along $X$ in the direction from its start to end.  
If the number of arrows, each of which intersects the arrow $X$ from left to right is $\alpha$, then the number of arrows, each of which crosses $X$ from right to left is also $\alpha$.
\end{lemma}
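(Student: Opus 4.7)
My plan is to interpret the L-to-R and R-to-L crossings as signed intersections of two closed loops in $S^2$, and then apply the vanishing of the algebraic intersection of any two loops in $S^2$ (since $H_1(S^2)=0$).

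First I would set up the two loops. The two endpoints of the chord $X$ on the chord-diagram circle correspond to the two visits of the oriented spherical curve $P$ through the double point $d_X$; these endpoints divide the circle into two arcs, the left arc $L$ (on the left as one walks from tail to head of $X$) and the right arc $R$. Via the parameterization of $P$, each of these arcs corresponds to a sub-arc of $P$ running from $d_X$ to $d_X$; denote the resulting two closed loops in $S^2$ by $\gamma_L$ and $\gamma_R$, both based at $d_X$. I would then observe that each arrow $Y$ crossing $X$ in the chord diagram corresponds to a double point $d_Y$ with one visit in the left arc and one in the right arc, i.e., a transverse intersection of $\gamma_L$ with $\gamma_R$; conversely every such transverse intersection (other than the shared base point $d_X$ itself) comes from such an arrow.

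The next step is to compute the local sign of each intersection. By the definition of $AD_{\dot{P}^+}$, every double point of $P$ is resolved to a negative crossing with the head of the arrow at the underpath and the tail at the overpath; this means that at every double point the ordered pair of tangent vectors $(v_{\text{tail}}, v_{\text{head}})$ is a positively oriented basis of the tangent plane. At an intersection $d_Y \in \gamma_L \cap \gamma_R$, the tangent to $\gamma_L$ at $d_Y$ is the tangent to $P$ at the visit that lies in $L$, and similarly for $\gamma_R$. If $Y$ crosses $X$ from left to right, then the tail of $Y$ lies in $L$, so the tangent to $\gamma_L$ equals $v_{\text{tail}}(Y)$ and the tangent to $\gamma_R$ equals $v_{\text{head}}(Y)$, giving local intersection sign $\operatorname{sign}(v_{\text{tail}}(Y), v_{\text{head}}(Y)) = +1$; if $Y$ crosses from right to left, the two tangents are swapped and the sign is $-1$.

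Finally, to conclude, I would round off $\gamma_L$ and $\gamma_R$ at the shared base point $d_X$ so that they become disjoint (near $d_X$ they locally occupy complementary pairs of quadrants bounded by the two branches of $P$ at $d_X$, so a small perturbation achieves this without altering the remaining intersections). Two disjoint closed loops in $S^2$ have algebraic intersection number zero, hence
\[\#(\text{L-to-R crossings}) - \#(\text{R-to-L crossings}) = 0,\]
so both counts equal $\alpha$. The main obstacle I expect is the sign bookkeeping: one must carefully match the notions of ``left/right of $X$'' in the chord diagram with the correct sub-curve of $P$ and with the correct order of tangent vectors at $d_Y$, and handle the shared endpoint $d_X$ of $\gamma_L$ and $\gamma_R$ via a clean perturbation argument.
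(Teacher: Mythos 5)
Your proof is correct and takes essentially the same route as the paper: your rounded-off loops $\gamma_L$ and $\gamma_R$ are exactly the two components $Q$ and $R$ that the paper obtains by Seifert-smoothing the double point of $X$, and your left-to-right versus right-to-left crossings are the paper's two intersection types (a) and (b). The only (minor) difference is the concluding step, where you invoke the vanishing of the algebraic intersection number of two loops in $S^2$ (via $H_1(S^2)=0$ and the consistent local sign coming from the negative-crossing convention), while the paper further Seifert-resolves $Q$ and $R$ into simple closed curves and uses that two simple closed curves on the sphere meet in an even number of points; your signed formulation is, if anything, the cleaner way to see that the two types balance exactly.
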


\noindent({\bf{Proof of Lemma~\ref{lemA}.}})
Arrow $X$ corresponds to a double point as shown in Fig.\ \ref{f6}.  
\begin{figure}[h!]
\centering
\includegraphics[width=4cm]{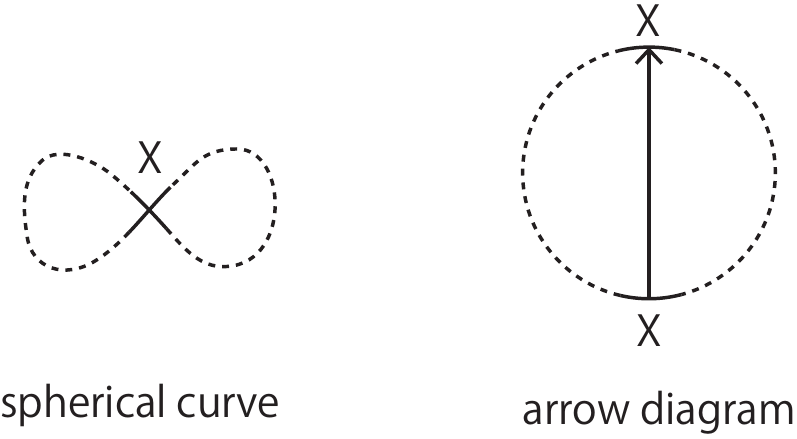}
\caption{Arrow $X$ corresponding to a double point.}\label{f6}
\end{figure}
First, choose orientation of $P$.  
Second, an operation, as shown in Fig.\ \ref{f7}, is applied to the double point that corresponds to $X$.  It is clear that the two-component spherical curves $Q$ and $R$ mutually intersect at even number of double points.
\begin{figure}[h!]
\centering
\includegraphics[width=4cm]{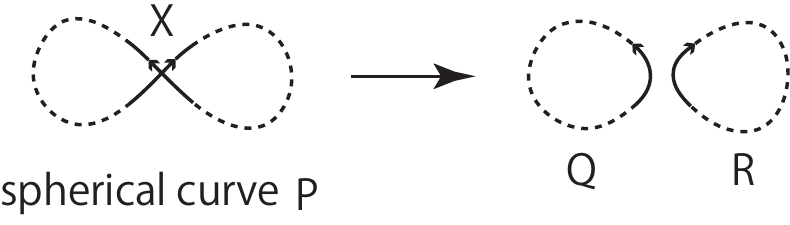}
\caption{Smoothing at the double point corresponding to $X$.  Dotted arcs indicate the connections of the spherical curves.}\label{f7}
\end{figure}
Let $d$ be an arbitrary double point of an intersection between $Q$ and $R$.  
Let $d_Q$ and $d_R$ be the branches corresponding to a double point $d$.  For $d$, there are two types of intersections, (a) and (b), as shown in the left-most column of Fig.\ \ref{f8}.
\begin{figure}[h!]
\centering
\includegraphics[width=8cm]{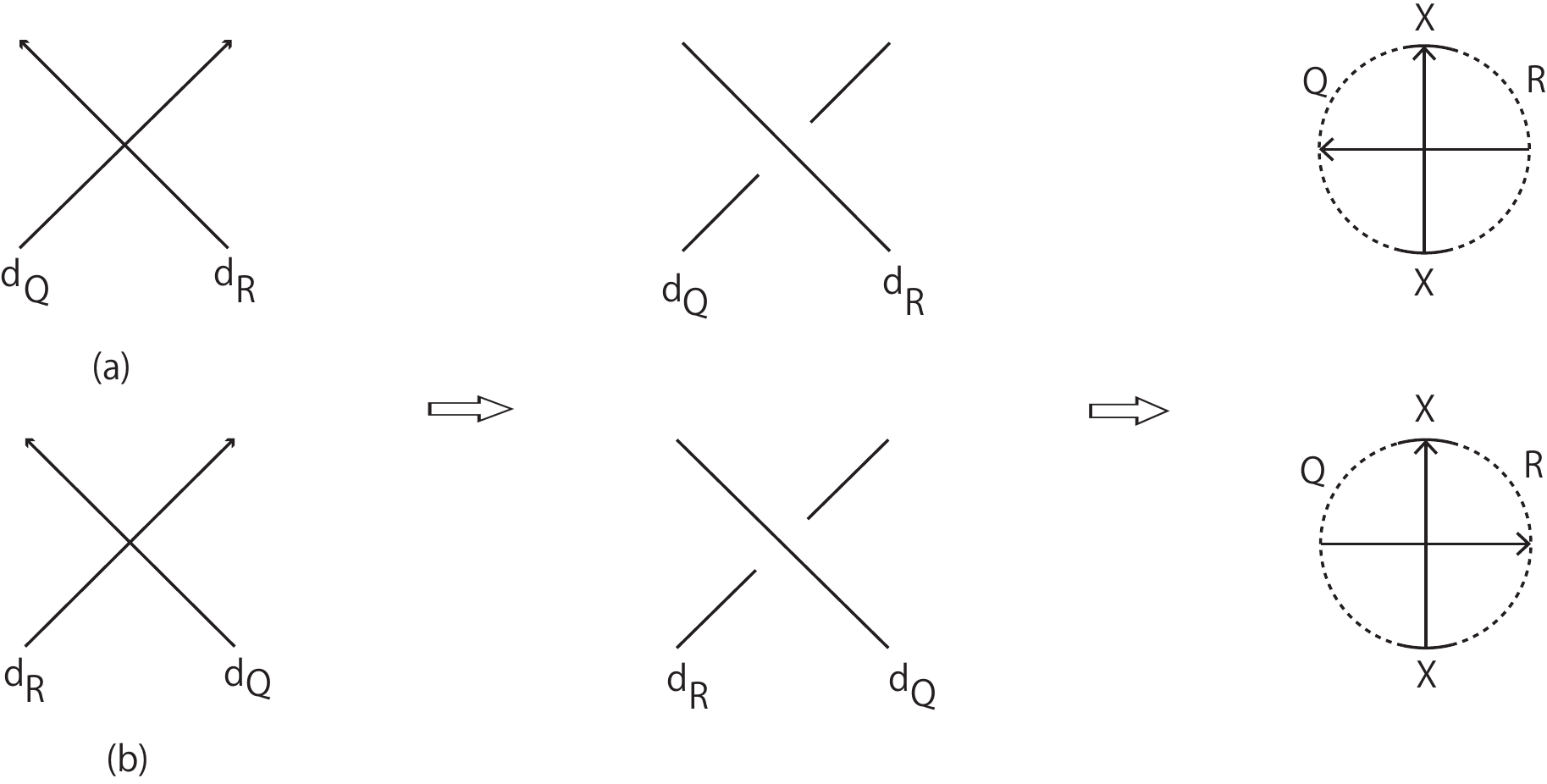}
\caption{Two types of intersections of two components $Q$ and $R$.}\label{f8}
\end{figure}
From Fig.\ \ref{f8}, it is easy to see that the number of double points of type (a) is equal to that of (b) (e.g., this can be seen when all the self-intersections of the components $Q$ and $R$ are smoothened by Seifert resolutions, as shown in the first line of {Fig.~ \ref{f8b}}.  Then, the intersections between $Q$ and $R$ become double points among simple closed curves.  Note that any two simple closed curves $q$ and $r$ mutually intersect at even number of double points.  cf.~Fig.\ \ref{f8b}).  
\begin{figure}[h!]
\begin{center}
\includegraphics[width=12cm]{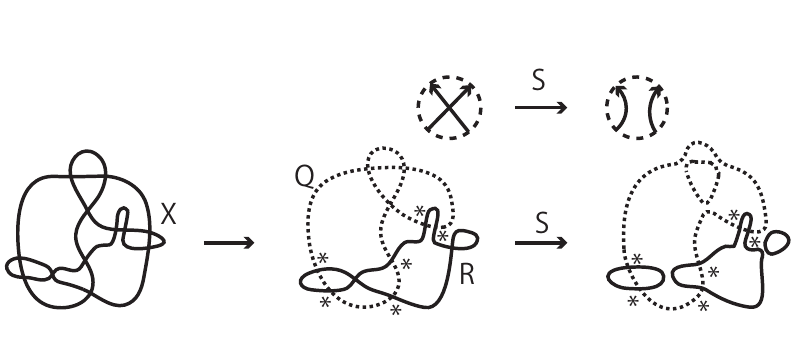}
\end{center}
\caption{$Q$ and $R$ mutually intersect at even number of double points.}\label{f8b}
\end{figure}


\noindent({\bf{End of Proof of Lemma~\ref{lemA}.}})

Now we consider Cases 1 and 2 shown in Fig.\ \ref{f5} using Lemma~\ref{lemA}.
For Case~1, let $S_1$ $=$ $\{a$$~|~~\dxu$ consists of arrow $a$ and $A$ before the move is applied$\}$, and for Case~2, let $S_2$ $=$ $\{a$$~|~~\dxb$ consists of arrow $a$ and $A$ before the move is applied$\}$.  
For each case, the increment and decrement under each move are presented in Table~\ref{t1}, where $\beta_1$ $=$ $|S_1|$ and $\beta_2$ $=$ $|S_2|$.  
\begin{table}[h!]
\caption{Cases 1 and 2.}\label{t1}
\centering
\begin{tabular}{|c|c|c|} \hline
&Case 1&Case 2 \\ \hline
$\dxu(\dot{P}^\epsilon)$ & $\beta_1 \to \beta_1$ & $0 \to 0$ \\ \hline
$\dxl(\dot{P}^\epsilon)$ & $\beta_1 \to 0$ & $0 \to \beta_2$ \\ \hline
$\dxr(\dot{P}^\epsilon)$ & $0 \to \beta_1$ & $\beta_2 \to 0$ \\ \hline
$\dxb(\dot{P}^\epsilon)$ & $0 \to 0$ & $\beta_2 \to \beta_2$ \\ \hline
\end{tabular}
\end{table}

\noindent({\bf{End of Proof of the independence of base points.}})

\noindent({\bf{Independence of orientations.}})  
Next, we check the independence of the orientation for functions $\dxu(\dot{P}^\epsilon)$, $\dxl(\dot{P}^\epsilon)$, and $\dxl(\dot{P}^\epsilon)$ $+$ $\dxr(\dot{P}^\epsilon)$.  First, by definition, $\dxu(\dot{P}^\epsilon)$ does not depend on the orientation.  This is because the mirror image of the based arrow diagram $\dxu$ is $\dxu$ itself.  Similarly, it can be seen that $\dxb(\dot{P}^\epsilon)$ and $\dxl(\dot{P}^\epsilon)$ $+$ $\dxr(\dot{P}^\epsilon)$ do not depend on the orientation.    

\noindent({\bf{End of Proof of the independence of orientations.}})

In the rest of this proof, we may write $\dxu(P)$, $\dxl(P)+\dxr(P)$, $\dxb(P)$ to represent $\dxu(\dot{P}^\epsilon)$, $\dxl(\dot{P}^\epsilon)$ $+$ $\dxr(\dot{P}^\epsilon)$, and $\dxb(\dot{P}^\epsilon)$, respectively.  

\noindent({\bf{Invariance of}} $\dxl(P)+\dxr(P)-\dxb(P)$ {\bf{under RI and strong RI\!I\!I.}})  To begin with, we show the invariance of $\dxl(P)+\dxr(P)-\dxb(P)$ under RI and strong RI\!I\!I for an arbitrary spherical curve $P$.  
Note that $\dxu(P)$, $\dxl(P) + \dxr(P)$, and $\dxb(P)$ do not depend on the position of a base point; thus, the base point can be arbitrarily positioned.  
By definition, we see that for an arbitrary spherical curve, $\dxb(P)$, $\dxl(P) + \dxr(P)$, and $\dxu(P)$ are invariant under RI (Fig.\ \ref{f9}).  
\begin{figure}[h!]
\centering
\includegraphics[width=4cm]{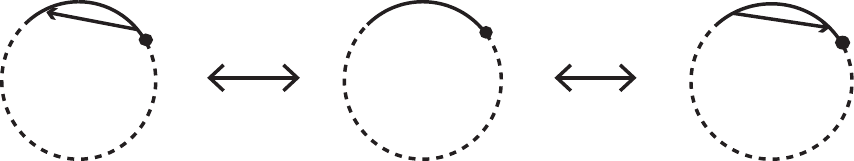}
\caption{Difference under a single RI.}\label{f9}
\end{figure}
Next, we see the differences for $\dxl(P) + \dxr(P)$ and $\dxb(P)$ with respect to a single strong RI\!I\!I, as shown in Fig.\ \ref{f10}.  A single strong RI\!I\!I increasing (resp.~decreasing) the number $\dxu(P)+\dxl(P)+\dxr(P)+\dxb(P)$ is denoted by $s3a$ (resp.~$s3b$).  
\begin{figure}[h!]
\centering
\includegraphics[width=4cm]{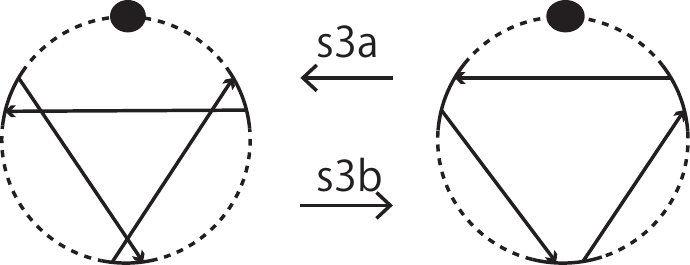}
\caption{Difference under a single strong RI\!I\!I.}\label{f10}
\end{figure}
\begin{table}[h!]
\caption{Increment under a single $s3a$ for each type.
}\label{t3}
\centering
\begin{tabular}{|c|c|} \hline
$\dxu(P)$ & $1$ \\ \hline
$\dxl(P) + \dxr(P)$ & $1$ \\ \hline
$\dxb(P)$ & $1$ \\ \hline
\end{tabular}
\end{table}
Table~\ref{t3} provides the claim for the invariance (the row corresponding to $\dxu$ is not used here; we use it later).

\noindent({\bf{Invariance of $\dxu(P)$ $-$ $\dxl(P)$ $-$ $\dxr(P)$ $+$ $\dxb(P)$ under RI and strong RI\!I.}})
We already have the invariance of RI from the proof of the invariance of $\dxl(P) + \dxr(P) -\dxb(P)$ under RI.  We have also already shown the independence of the base point for $\dxu(P)$, $\dxl(P) + \dxr(P)$, and $\dxb(P)$.  Therefore, here, it is sufficient to show the invariance of strong RI\!I when a base point is positioned at a suitable location.  Thus, we consider the difference with respect to strong RI\!I as shown in Fig.\ \ref{f15} (note that the base point is placed at 
a position closest to where a strong RI\!I is applied, as shown in Fig.\ \ref{f15}).  
\begin{figure}[h!]
\centering
\includegraphics[width=5cm]{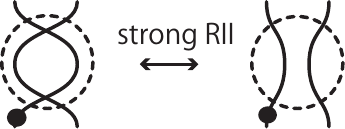}
\caption{A base point at the most neighbored place.}\label{f15}
\end{figure}
Here, strong RI\!I increasing (resp.~decreasing) double points is denoted by $s2a$ (resp.~$s2b$) as shown in Fig.\ \ref{f12}.  
\begin{figure}[h!]
\centering
\includegraphics[width=5cm]{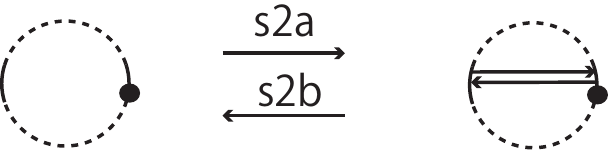}
\caption{Difference under a single strong RI\!I.}\label{f12}
\end{figure}
When we apply $s2a$ to an arbitrary spherical curve $P$ with a base point, each increment of the values of $\dxu$, $\dxl$, $\dxr$, and $\dxb$ for Cases 1 and 2 (see Fig.\ \ref{f11}) of the based arrow diagram of $P$ is shown in Table~\ref{t4}.  By Lemma~\ref{lemA}, there exist $m$ arrows shown by the dotted arrow for Case 2 if there exist $m$ arrows shown by the dotted arrow for Case 1 (Fig.\ \ref{f11}).
\begin{figure}[h!]
\centering
\includegraphics[width=5cm]{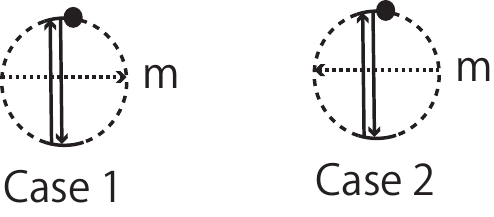}
\caption{We set the number of arrows, indicated by the dotted arrow, as $m$ (cf.~Lemma~\ref{lemA}).}\label{f11}
\end{figure}
\begin{table}[h!]
\caption{Increments under a single $s2a$.}\label{t4}
\centering
\begin{tabular}{|c|c|c|} \hline
&Case 1& Case 2 \\ \hline
$\dxu(P)$ &$m$&$0$ \\ \hline
$\dxl(P) + \dxr(P)$ &$m$&$m$ \\ \hline
$\dxb(P)$ &$0$&$m$ \\ \hline
\end{tabular}
\end{table}
Table~\ref{t4} shows the invariance of strong RI\!I.

\noindent ({\bf{Invariance of}} $\dxu(P)$ ($=\dxb(P)$) {\bf{under weak RI\!I\!I.}}) 
We have already shown the invariance under RI (at the beginning of the proof of invariance of $\dxl(P)$ $+$ $\dxr(P)$ $-$ $\dxb(P)$).  Thus, it is sufficient to show the invariance under weak RI\!I\!I.  
A single weak RI\!I\!I is shown in Fig.\ \ref{f13} in terms of arrow diagrams.  A single weak RI\!I\!I increasing (resp.~decreasing) the number $\dxu(P)$ $+$ $\dxl(P)$ $+$ $\dxr(P)$ $+$ $\dxb(P)$ is denoted by $w3a$ (resp.~$w3b$).  
\begin{figure}[h!]
\centering
\includegraphics[width=5cm]{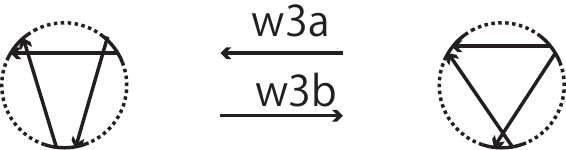}
\caption{Difference under a single weak RI\!I\!I.}\label{f13}
\end{figure}
By the independence of the base point for $\dxu(P)$ and $\dxb(P)$, we can choose a suitable position for the base point (Fig.\ \ref{f14}).  
\begin{figure}[h!]
\centering
\includegraphics[width=5cm]{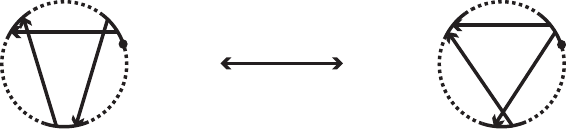}
\caption{A pair of weak RI\!I\!I with base points.}\label{f14}
\end{figure}
Table~\ref{t5} shows the increment of $\dxu(P)$ (information corresponding to $\dxl(P) + \dxr(P)$ and $\dxb(P)$ is not used here, we used it later),  
\begin{table}[h!]
\caption{Increment under a single $w3a$.}\label{t5}
\centering
\begin{tabular}{|c|c|} \hline
$\dxu(P)$ &$0$ \\ \hline
$\dxl(P) + \dxr(P)$ &$1$\\ \hline
$\dxb(P)$ &$0$\\ \hline
\end{tabular}
\end{table}
which shows the invariance of $\dxu(P)$ under weak RI\!I\!I.  

\noindent ({\bf{Proof of the equality $\dxu(P)=\dxb(P)$.}})

Here, we show the equality $\dxu(P)=\dxb(P)$ for an arbitrary spherical curve $P$.  
Tables~\ref{t3} and \ref{t5} indicate that $\dxu(P)-\dxb(P)$ does not change under any RI\!I\!I.  We also know that $\dxu(P) - \dxb(P)$ is invariant under RI.  

Now, we consider its behavior under strong RI\!I.  Recall that $\dxu(P)$ and $\dxb(P)$ do not depend on the position of the base point.  
Next, we observe the difference before and after the application of a single strong RI\!I to the based arrow diagrams derived from spherical curves, as shown Fig.\ \ref{f16a}.  
\begin{figure}[h!]
\centering
\includegraphics[width=5cm]{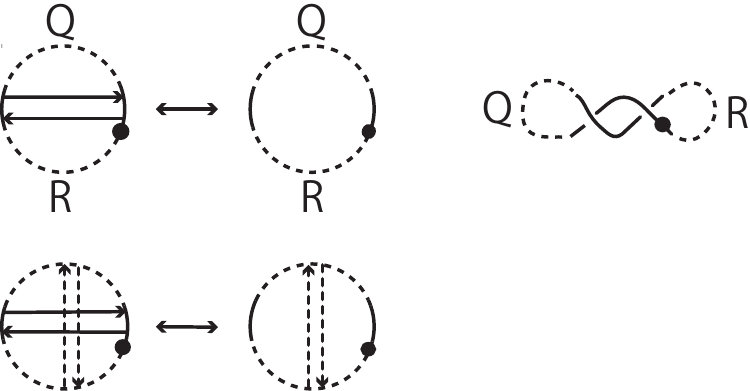}
\caption{Differences of strong RI\!I to the based arrow diagrams (left) derived from spherical curves (right).  }\label{f16a}
\end{figure}
By Lemma~\ref{lemA}, in Fig.\ \ref{f16a}, the dotted arcs $Q$ and $R$ must intersect at even number of double points.  Further, in the corresponding based arrow diagram, the number of arrows from the $Q$-part to $R$-part is equal to the number of arrows from the $R$-part to $Q$-part.  See the second row in Fig.\ \ref{f16a}.  The dotted arrows, whether mutually intersecting or not, do not contribute to the increment or decrement of $\dxu(P)$ $-$ $\dxb(P)$ under a single strong RI\!I.   
Therefore, $\dxu(P)$ $-$ $\dxb(P)$ is invariant under strong RI\!I.  

As a result, $\dxu(P)-\dxb(P)$ is invariant under RI, RI\!I\!I, and strong RI\!I.  It is easy to see that a single weak RI\!I is generated by RI, RI\!I\!I, and strong RI\!I.  Thus, $\dxu(P)-\dxb(P)$ is invariant under RI, RI\!I, and RI\!I\!I.  Therefore, $\dxu(P)-\dxb(P)$ is constant for all spherical curves.  Note that $\dxu(\bigcircle)-\dxb(\bigcircle)$ $=$ $0$ where $\bigcircle$ is a simple closed curve.  Thus, we have the equality $\dxu(P)$ $=$ $\dxb(P)$ for an arbitrary spherical curve $P$.

\noindent ({\bf{Additivity.}})

Finally, we can see that these invariants are additive because it is clear that $x(P \sharp_{(p_1, p_2)} P')$ $=$ $x(P) + x(P')$ for $x$ $=$ $\dxu$, $\dxl$, $\dxr$, or $\dxb$ where $P \sharp_{(p_1, p_2)} P'$ is the connected sum of $P$ and $P'$ (Definition~\ref{d4}).

\noindent({\bf{Non-triviality.}})
Non-trivialities of the invariants in the statement of Theorem~\ref{thm1} are shown in Tables~\ref{tt1}, \ref{tt2}, and \ref{tt4}.
\end{proof}
\section{An explanation of a framework that produces invariants of Theorem~\ref{thm1}}\label{sec3}
The following two questions (A) and (B) are essentially different.  

\noindent(A) How do we prove the invariance under some Reidemeister moves for functions in Theorem~\ref{thm1}? 

\noindent(B) How do we find functions in Theorem~\ref{thm1}?

An answer to (A) (resp.~(B)) is given in Sec.\ \ref{sec2} (resp. this section).  
Throughout this section, the following steps are applied.  Note that we implicitly consider a $\mathbb{Q}$-vector space generated by arrow diagrams, and the moves of the base point shown in Fig.\ \ref{f5} (Cases 1 and 2) are called {\it{base point moves}}.  Let $n$ be the number of observed elements in $\{ \dxu, \dxl, \dxr, \dxb, \dxl + \dxr \}$ for observed Reidemeister moves.     
\begin{itemize}
\item Step 1: Construct a matrix consisting of $n$ rows, where each row corresponds to an observed based arrow diagram and each column corresponds to an observed local move.  
\item Step 2: Compute the rank $r$ of the matrix.  Note that there exist $n-r$ invariants.
\item Step 3: Obtain the invariants using the matrix.  
\end{itemize}

Recall that for an oriented spherical curve $\dot{P}^\epsilon$ with a base point obtained from $P$, it is easy to see that $\dxu(\dot{P}^\epsilon)$ ($= \dxb(\dot{P}^\epsilon)$) and $\dxl(\dot{P}^\epsilon)+\dxr(\dot{P}^\epsilon)$ do not depend on the orientation (see~Sec.\ \ref{sec2}).  

\noindent$\bullet$ ({\bf{Invariants under base point moves.}})
Consider Steps 1--3 to show the invariance under base point moves.  
Table~\ref{t1} gives the matrix 
$\left( 
\begin{matrix} 0 & 0 \\
-\beta_1 & \beta_2 \\
\beta_1 & -\beta_2 \\
0 & 0 \\
\end{matrix} 
\right)$.  Because the rank of this matrix is $1$, there exist two invariants under base point moves.   
From the matrix, it is easy to find the two invariants $\dxu(\dot{P}^\epsilon)$ ($=$ $\dxb(\dot{P}^\epsilon)$), $\dxl(\dot{P}^\epsilon)+\dxr(\dot{P}^\epsilon)$ under base point moves.

In the rest of this paper, we may write $\dxu(P)$, $\dxl(P)+\dxr(P)$, and $\dxb(P)$ to represent $\dxu(\dot{P}^{\epsilon})$, $\dxl(\dot{P}^\epsilon)+\dxr(\dot{P}^\epsilon)$, and $\dxb(\dot{P}^\epsilon)$, respectively.   Note also that we have already proved that $\dxu(P)=\dxb(P)$ for any spherical curve $P$.  Thus, in the rest of this section, we exclude $\dxb(P)$.

\noindent$\bullet$ ({\bf{Invariants under RI and strong RI\!I\!I.}}) 
Similar to the above the discussion, Table~\ref{t6} gives the matrix $\left(
\begin{matrix}
1\\
1\\
\end{matrix} \right)$ (see also Table~\ref{t3}).
\begin{table}[h!]
\caption{Increment under a single $s3a$.}\label{t6}
\centering
\begin{tabular}{|c|c|} \hline
$\dxu$ & $1$ \\ \hline
$\dxl+\dxr$ & $1$ \\ \hline
\end{tabular}
\end{table}
The rank of this matrix is $1$.  Thus, there exists one invariant.  From the matrix, it is clear that the invariant is $\dxu(P)$ $-$ $\dxl(P)$ $-$ $\dxr(P)$. 

\noindent ({\bf{Invariants under RI and strong RI\!I.}})  Table~\ref{t7} gives the matrix 
$\left(\begin{matrix}
m \\
2m \\
\end{matrix}\right)$ (see also Table~\ref{t4}).
\begin{table}[h!]
\caption{Increment under a single $s2a$.}\label{t7}
\centering
\begin{tabular}{|c|c|}\hline
$\dxu$ & $m$\\ \hline
$\dxl + \dxr$ & $2m$\\ \hline
\end{tabular}
\end{table}
The rank of this matrix is $1$ and thus, there exists one invariant under RI and strong RI\!I as a linear combination of these based arrow diagrams.  From the matrix, we obtain the invariant $2 \dxu(P)-\dxl(P)-\dxr(P)$
 under RI and strong RI\!I.

\noindent ({\bf{Invariant under RI and weak RI\!I\!I.}})  Table~\ref{t10} gives the matrix 
$\left( \begin{matrix}
0 \\
1 \\
\end{matrix} \right)$ (see also Table~\ref{t5}).   
\begin{table}[h!]
\caption{Increment under a single $w3a$.}\label{t10}
\centering
\begin{tabular}{|c|c|} \hline
\dxu & 0 \\ \hline
\dxl + \dxr & 1 \\ \hline
\end{tabular}
\end{table}  
The rank of this matrix is $1$.  Thus, there exists one invariant under RI and weak RI\!I\!I.  The matrix gives the invariant $\dxu(P)$.


\noindent ({\bf{Invariant under RI and weak RI\!I.}})  One would expect to give some invariants under RI and weak RI\!I in the same way mentioned above.  Although we can find an invariant under RI and weak RI\!I in the same way, this invariant is trivial (i.e., all the values of the invariant are $0$ for all spherical curves).  

Denote weak RI\!I increasing double points by $w2a$.  Because $\dxu(P)$ and $\dxl(P)$ $+$ $\dxr(P)$ are independent of the position of the base point, it is sufficient to consider only Fig.\ \ref{f17}.  
\begin{figure}[h!]
\centering
\includegraphics[width=5cm]{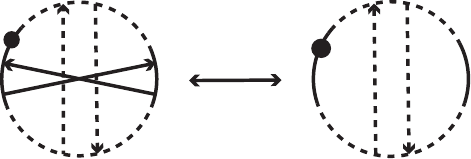}
\caption{Difference under a single weak RI\!I.}\label{f17}
\end{figure}
If we set the number of dotted arrows from the upper to lower part of the based arrow diagram as $m$, then the arrows from the lower to upper part of the based arrow diagram should be $m-1$ (cf.~Lemma~\ref{lemA}).  
Thus, we have the matrix 
$\left(\begin{matrix}
m \\
2m-1 \\
\end{matrix}
\right)$ from Table~\ref{t9}.  
\begin{table}[h!]
\caption{Increment under a single $w2a$.}\label{t9}
\centering
\begin{tabular}{|c|c|} \hline
\dxu & $m$ \\ \hline
\dxl+\dxr & $2m-1$ \\ \hline
\end{tabular}
\end{table}
For each $m$, the rank of the matrix is $1$.  Then the dimension of the kernel is $1$ for each $m$.  If we vary $m$, then the intersection of the kernels has dimension $0$.  
Thus, there are no non-trivial invariants in this case.    

\section{Relation between Arnold invariants and Theorem~\ref{thm1}}\label{secArnold}
Additive integer-valued invariants $J^+$ (an invariant under strong RI\!I and RI\!I\!I), $J^-$ (an invariant under weak RI\!I and RI\!I\!I), and $St$ (an invariant under RI\!I) were defined by Arnold \cite{arnold}.  Polyak's Gauss diagram formulae \cite[Theorem~1]{polyak} can be used to yield Lemma~\ref{lemmaB}. (Note that there is a typo in Polyak's Theorem; thus, replace the coefficient $(-\frac{1}{2}, \frac{1}{2}, \frac{3}{2})$ with $(-\frac{1}{2}, \frac{1}{2}, \frac{1}{2})$ in the $St$-formula \cite[Theorem~1]{polyak}.)  
\begin{lemma}\label{lemmaB}
In the set of any linear combinations of three invariants $J^+$, $J^-$, and $St$, up to multiplying a constant, only the linear combination $J^+/2 + St$ is invariant under RI for spherical curves.  
\end{lemma}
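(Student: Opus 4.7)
The plan is to translate the invariance condition into a linear algebra problem on the coefficient vector $(a,b,c)$ of an arbitrary combination $aJ^{+}+bJ^{-}+c\,St$, and then to compute the RI-behavior of each Arnold invariant directly from Polyak's Gauss diagram formulae. I would begin with Polyak's formulae (using the corrected $St$-coefficient $(-\tfrac12,\tfrac12,\tfrac12)$ flagged in the statement), which express each of $J^{+}$, $J^{-}$, and $St$ as a weighted count of small sub-arrow diagrams of the ambient arrow diagram. A single RI move inserts or deletes an \emph{isolated} arrow (one not interlinked with any other arrow), so only those terms in Polyak's formulas whose sub-template involves that isolated arrow can change, and these are easy to enumerate.

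Carrying out this enumeration yields explicit values $\delta_{+}=\Delta J^{+}$, $\delta_{-}=\Delta J^{-}$, $\delta_{0}=\Delta St$ for a positive RI move, and a second triple $\delta'_{+}, \delta'_{-}, \delta'_{0}$ for a negative RI move. Then $aJ^{+}+bJ^{-}+c\,St$ is invariant under RI if and only if

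\begin{equation*}
a\delta_{+}+b\delta_{-}+c\delta_{0}=0\quad\text{and}\quad a\delta'_{+}+b\delta'_{-}+c\delta'_{0}=0.
\end{equation*}

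I expect the resulting $2\times 3$ coefficient matrix to have rank $2$, so that its kernel is one-dimensional; a direct substitution of $(a,b,c)=(1/2,0,1)$ into Polyak's formulae should confirm that $J^{+}/2+St$ lies in this kernel, pinning down the unique (up to scalar) combination asserted by the lemma.

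The principal obstacle will be the careful bookkeeping of sign conventions in Polyak's formulae --- orientations of the underlying curve, orientations of the arrows, and the convention for distinguishing positive versus negative RI loops all affect the signs of the various $\delta$'s, and the $St$-formula in particular requires the typo correction. Once these conventions are pinned down consistently with Polyak's (corrected) statement, the remaining work is routine linear algebra; in particular, one should verify independently that the $J^{-}$-coordinate of the kernel vector is genuinely forced to vanish, which can be seen by exhibiting a single pair of RI-related spherical curves on which $J^{-}$ changes.
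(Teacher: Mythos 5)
Your overall strategy---record the variation vector of $(J^+,J^-,St)$ under each type of RI move and compute the common kernel of the resulting linear conditions---is the right one, and it is essentially all the paper does here (the paper offers nothing beyond a citation of Polyak's Theorem~1). The linear algebra does close: the two kink types change $(J^+,J^-,St)$ by $(-2,-3,+1)$ and $(0,-1,0)$ respectively (as one reads off Arnold's normalization on the standard curves, $K_1\to K_2$ and $K_1\to K_0$, consistently with the identity $J^-=J^+-n$ where $n$ is the number of double points), so
\begin{equation*}
\ker\begin{pmatrix}-2&-3&1\\ 0&-1&0\end{pmatrix}=\langle(1,0,2)\rangle ,
\end{equation*}
which forces $b=0$, $c=2a$, i.e.\ a scalar multiple of $J^+/2+St$.

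Two steps of your plan, however, would fail as written. First, the claim that only the terms of Polyak's formulas whose sub-template contains the new isolated arrow can change under RI is false: those formulas are not pure sub-diagram counts but carry polynomial correction terms in the rotation (winding) number and in the index of the base point, both of which change when a kink is inserted. These corrections are essential---in passing from the embedded circle to the once-kinked circle every two-arrow sub-template count is $0$ on both sides, yet $J^+$ drops by $2$---and they are also what makes the variation of the \emph{correct} combination independent of where the kink is inserted (for a plane curve the variation of $St$ alone under RI depends on the index of the region receiving the kink, so your $\delta$'s are not even constants unless the corrections are included or the reduction to the spherical setting is made first). Second, two logical slips: checking that $(1/2,0,1)$ lies in the kernel of your two chosen conditions only shows that those two particular RI instances preserve $J^+/2+St$; the invariance under \emph{every} RI move on \emph{every} spherical curve is a separate (known, but not free) fact that requires the location-independence just mentioned. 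And exhibiting one RI move on which $J^-$ changes shows only that $(0,1,0)$ is not in the kernel; it does not show that every kernel vector has vanishing $J^-$-coordinate---for that you genuinely need the second, linearly independent variation vector.
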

For a spherical curve $P$, it is well-known that $J^+/2 + St$ does not depend on an orientation of $P$.  By the definitions of $J^+$ and $St$, we have the differences under each Reidemeister move, as shown in Table~\ref{arnoldTable}.
\begin{table}[h!]
\caption{Differences of $J^+/2 + St$ under Reidemeister moves.}\label{arnoldTable}
\centering
\begin{tabular}{|c|c|c|c|c|c|}\hline
&$RI$&$s2a$&$w2a$&$s3a$&$w3a$ \\ \hline
$J^+/2 + St$ &$0$&$0$&$+1$&$+1$&$-1$ \\ \hline
\end{tabular}
\end{table}    
\begin{theorem}
Let $P$ be an arbitrary spherical curve.  
$J^+/2(P) + St(P)$ is equal to $\dxu(P)$ $-$ $\dxl(P)$ $-$ $\dxr(P)$ $+$ $\dxb(P)$.  Invariants $\dxl(P)$ $+$ $\dxr(P)$ $-$ $\dxb(P)$ and $\dxu(P)$ cannot be presented as a linear combination of Arnold invariants $J^+$, $J^-$, and $St$.  Moreover, any two invariants among the invariants $J^+/2 + St$, $\dxb(P)$ $-$ $\dxl(P)$ $-$ $\dxr(P)$, and $\dxu(P)$ are mutually independent.  
\end{theorem}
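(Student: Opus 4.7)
\emph{Plan.} The proof splits along the three clauses of the theorem.

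For the equality in the first clause, the plan is to compare move-by-move how each side changes. Both $J^{+}/2 + St$ and $\dxu - \dxl - \dxr + \dxb$ are RI-invariant, the former by Lemma~\ref{lemmaB} and the latter by Theorem~\ref{thm1}. Using Tables~\ref{t3}, \ref{t4}, \ref{t5}, and \ref{t9} together with the identity $\dxu(P) = \dxb(P)$, one computes the increment of $\dxu - \dxl - \dxr + \dxb$ under each remaining Reidemeister move: $0$ under $s2a$ (in both Cases 1 and 2), $m - (2m-1) + m = 1$ under $w2a$, $1 - 1 + 1 = 1$ under $s3a$, and $0 - 1 + 0 = -1$ under $w3a$. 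These values match Table~\ref{arnoldTable} exactly, so the difference of the two sides is invariant under every Reidemeister move. Since any spherical curve is Reidemeister-equivalent to the simple closed curve $\bigcircle$ and both sides vanish on $\bigcircle$, the identity holds for all $P$.

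For the second clause, suppose $f = aJ^{+} + bJ^{-} + cSt$ agrees with either $\dxl + \dxr - \dxb$ or $\dxu$. Each candidate is RI-invariant (Theorem~\ref{thm1}), and Lemma~\ref{lemmaB} then forces $f = \lambda(J^{+}/2 + St)$ for some $\lambda \in \mathbb{Q}$. Combined with the first clause, the hypothesis reduces to a proportionality among $\dxu$, $\dxb$, and $\dxl + \dxr$ on every spherical curve; after using $\dxu = \dxb$ this becomes a linear relation of the form $\alpha \dxu = \beta(\dxl + \dxr)$ with fixed rational coefficients depending on $\lambda$. To refute every such relation, one exhibits spherical curves in Section~\ref{secTable} (Tables~\ref{tt1}, \ref{tt2}, \ref{tt4}) whose pairs $(\dxu(P), \dxl(P) + \dxr(P))$ are not proportional; a single such pair defeats every choice of $\lambda$.

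For the third clause, substituting $\dxu = \dxb$ turns the three invariants $J^{+}/2 + St$, $\dxb - \dxl - \dxr$, and $\dxu$ into $2\dxu - (\dxl + \dxr)$, $\dxu - (\dxl + \dxr)$, and $\dxu$, all living in the two-dimensional $\mathbb{Q}$-vector space spanned by $\dxu$ and $\dxl + \dxr$. The transition matrix of any two of them relative to the basis $(\dxu, \dxl + \dxr)$ is nonsingular (the three relevant $2 \times 2$ determinants are $-1$, $1$, and $1$), so pairwise independence of the three invariants reduces to producing two spherical curves on which $(\dxu, \dxl + \dxr)$ takes linearly independent values in $\mathbb{Q}^2$. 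Such curves are again read off from the tables of Section~\ref{secTable}.

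The main obstacle is neither the move-by-move computation nor the algebraic bookkeeping, both of which are routine once $\dxu = \dxb$ is in hand. The substantive step is selecting explicit spherical curves whose tabulated invariant values witness the nonproportionalities required in (2) and (3); this is handled by inspecting the tables of Section~\ref{secTable}.
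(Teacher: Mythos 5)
Your proposal is correct and follows essentially the same route as the paper: the first clause by matching the per-move increments (drawn from Tables~\ref{t6}, \ref{t7}, \ref{t9}, \ref{t10}, using $\dxu=\dxb$) against Table~\ref{arnoldTable} and normalizing on the simple closed curve, and the second and third clauses by combining Lemma~\ref{lemmaB} with explicit values read off the tables (the paper uses $3_1$, $4_1$, and $6_2$). If anything, your treatment of the second clause is slightly more complete than the paper's, which invokes only RI-invariance and non-triviality and leaves implicit the exclusion of a nonzero multiple of $J^+/2+St$; your reduction to exhibiting curves with non-proportional pairs $\bigl(\dxu(P),\,\dxl(P)+\dxr(P)\bigr)$ makes that step explicit.
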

\begin{proof}
$\dxu(P)-\dxl(P)-\dxr(P)+\dxb(P)$ changes by $0$ under RI, by $0$ under $s2a$ (Table~\ref{t7}), by $+1$ under $w2a$ (Table~\ref{t9}), by $+1$ under $s3a$ (Table~\ref{t6}), and by $-1$ under $w3a$ (Table~\ref{t10}).  For a simple closed curve $\bigcircle$, $J^+(\bigcircle)/2 + St(\bigcircle)=0$ and $\dxu(\bigcircle)-\dxl(\bigcircle)-\dxr(\bigcircle)+\dxb(\bigcircle)=0$; hence, we have the first claim.  

Now, we prove the second claim.  $\dxl(P)+\dxr(P)-\dxb(P)$ and $\dxu(P)$ are invariant under RI and are non-trivial.  Thus, Lemma~\ref{lemmaB} implies the second claim.  

Finally, let $3_1$, $4_1$, and $6_2$ be the spherical curves defined in Table~\ref{table5}.  
We have
\begin{align*}
&J^+(3_1)/2 + St(3_1) = 1, J^+(4_1)/2 + St(4_1) = 0, J^+(6_2)/2 + St(6_2) = 0,\\
&\dxl(3_1) + \dxr(3_1) - \dxb(3_1) = 0, \dxl(4_1) + \dxr(4_1) - \dxb(4_1) = 1, \\
&\dxl(6_2) + \dxr(6_2) - \dxb(6_2)=2, {\rm{~and~}}\dxu(3_1) = \dxu(4_1) = 1.  
\end{align*}
Therefore, we have the third claim.    
\end{proof}
\begin{corollary}
Let $P$ be a spherical curve and let $\x(P)$ be $\dxu(P)$ $+$ $\dxl(P)$ $+$ $\dxr(P)$ $+$ $\dxb(P)$.  
If $J^+(P)/2$ $+$ $St(P)$ $=$ $0$, $\x(P)$ $=$ $4 \dxu(P)$; in particular, $\x(P)$ $\equiv$ $0$ $($mod $4$$)$.  
\end{corollary}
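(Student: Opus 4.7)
The plan is to combine two identities already established in the paper. First, the theorem immediately preceding this corollary states that
\[
\tfrac{J^+(P)}{2} + St(P) = \dxu(P) - \dxl(P) - \dxr(P) + \dxb(P).
\]
Second, Theorem~\ref{thm1} gives the equality $\dxu(P) = \dxb(P)$ for every spherical curve $P$. My proof will just substitute these two facts into the defining expression for $\x(P)$.

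Concretely, assuming $\tfrac{J^+(P)}{2} + St(P) = 0$, the first identity yields
\[
\dxu(P) + \dxb(P) = \dxl(P) + \dxr(P).
\]
Using $\dxu(P) = \dxb(P)$, the left-hand side becomes $2\dxu(P)$, so $\dxl(P) + \dxr(P) = 2\dxu(P)$. Plugging this, together with $\dxb(P) = \dxu(P)$, into $\x(P) = \dxu(P) + \dxl(P) + \dxr(P) + \dxb(P)$ gives $\x(P) = \dxu(P) + 2\dxu(P) + \dxu(P) = 4\dxu(P)$, from which $\x(P) \equiv 0 \pmod 4$ is immediate.

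There is really no obstacle here; the content of the corollary is a one-line algebraic consequence once the theorem and the $\dxu = \dxb$ equality are in hand. The only thing I would take care to spell out is the logical order: cite the theorem for the formula for $\tfrac{J^+}{2}+St$, invoke Theorem~\ref{thm1} for $\dxu(P)=\dxb(P)$, solve for $\dxl(P)+\dxr(P)$, and finally assemble the four terms in the definition of $\x(P)$.
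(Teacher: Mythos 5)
Your proof is correct and follows essentially the same route as the paper: both translate the hypothesis via the identity $J^+/2 + St = \dxu - \dxl - \dxr + \dxb$ into $\dxu(P)+\dxb(P)=\dxl(P)+\dxr(P)$ and then use $\dxu(P)=\dxb(P)$ from Theorem~\ref{thm1} to get $\x(P)=4\dxu(P)$. Your write-up is actually slightly more explicit than the paper's, which leaves the substitution of the hypothesis implicit.
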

\begin{proof}
If $\dxu(P)+\dxb(P)$ $=$ $\dxl(P)+\dxr(P)$, we have
\begin{equation*}
\x(P) = \dxu(P) + \dxl(P) + \dxr(P) + \dxb(P)  
= 4 \dxu(P).
\end{equation*}
\end{proof}
\begin{remark}
The invariant $\dxu(P)$ is non-trivial (cf.~Table~\ref{table5}).  
\end{remark}

\section{An invariant under RI and weak RI\!I}\label{sec4}
An integer-valued invariant under RI and weak RI\!I is not covered by Theorem~\ref{thm1}.    
Let us consider a {\it{Seifert resolution}}, as shown in Fig.\ \ref{sr}, for every double point of a spherical curve $P$.
\begin{figure}[h!]
\centering
\includegraphics[width=3cm]{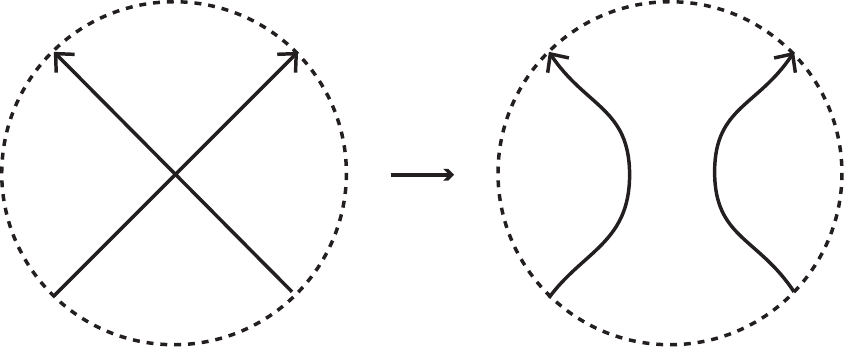}
\caption{Seifert resolution.}\label{sr}
\end{figure}
After all double points are resolved, we have an arrangement of finite number of circles.  The arrangement of circles on a sphere is denoted by $S(P)$.  It is known that $S(P)$ is invariant under weak RI\!I and weak RI\!I\!I.  The number of circles in $S(P)$ is denoted by $s(P)$.
\begin{theorem}\label{thm_w12}
Let $P$ be an arbitrary spherical curve and $c(P)$ be the number of double points of $P$.
\[2 \dxu(P) - 2 \dxl(P) - 2 \dxr(P) + 2 \dxb(P) + s(P) - c(P)\]
is invariant under RI and weak RI\!I and changes by $-2$ under $w3a$.  
Moreover, 
\[s(P)-c(P)\]
is invariant under RI and weak RI\!I\!I.  These invariants are non-trivial.    
\end{theorem}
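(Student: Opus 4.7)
The plan is to reduce everything to the already-computed increments of $\dxu, \dxl, \dxr, \dxb$ under the Reidemeister moves together with the behavior of $s(P)$ and $c(P)$, using the identity $\dxu(P) = \dxb(P)$ established inside Theorem~\ref{thm1}.

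First I would record how $s(P)$ and $c(P)$ change. A single RI creates or destroys a nugatory crossing, whose Seifert resolution splits off one small circle while leaving the rest of $S(P)$ intact; hence $s(P)$ and $c(P)$ each change by $\pm 1$ and $s(P) - c(P)$ is RI-invariant. As recalled just before the statement, $s(P)$ is preserved by weak RI\!I and by weak RI\!I\!I. Weak RI\!I\!I also preserves the crossing count, which immediately gives invariance of $s(P) - c(P)$ under RI and weak RI\!I\!I. Under $w2a$, $s(P)$ is unchanged while $c(P)$ grows by $2$, so $\Delta(s - c) = -2$.

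Next I would track $\Phi(P) := 2\dxu(P) - 2\dxl(P) - 2\dxr(P) + 2\dxb(P)$. Since $\dxu(P)=\dxb(P)$ holds for every spherical curve, the local increments satisfy $\Delta \dxu = \Delta \dxb$ under every move, so $\Delta\Phi = 4\Delta\dxu - 2\Delta(\dxl+\dxr)$. Under RI every sub-arrow count is invariant (Fig.~\ref{f9}), hence $\Delta\Phi = 0$. Under $w2a$, Table~\ref{t9} gives $\Delta\dxu = m$ and $\Delta(\dxl+\dxr) = 2m - 1$, producing $\Delta\Phi = 4m - (4m - 2) = +2$. Under $w3a$, Table~\ref{t10} gives $\Delta\dxu = 0$ and $\Delta(\dxl+\dxr) = 1$, producing $\Delta\Phi = -2$. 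Combining with the values of $\Delta(s-c)$ above yields $\Delta[\Phi + (s - c)] = 0$ under RI and under $w2a$, and $-2$ under $w3a$, which is the first assertion.

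For non-triviality of both invariants I would evaluate them on a pair of entries from Table~\ref{table5}: since $\dxu, \dxl, \dxr, \dxb$ are already tabulated there and $s, c$ can be read off directly from the corresponding curves, two entries with distinct invariant values suffice. The step that demands genuine care is checking that a weak RI\!I really does preserve $S(P)$ and that the Seifert resolution at the RI kink contributes exactly one extra circle; this is a short local verification, but it is the geometric source of the cancellation $\Delta\Phi + \Delta(s - c) = 0$ under $w2a$ and of the matching $\Delta s = \Delta c$ under RI, so it is where one must be careful to avoid sign or orientation errors.
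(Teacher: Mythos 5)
Your proposal is correct and follows essentially the same route as the paper: the paper's proof is exactly the increment bookkeeping of Table~\ref{t11} (the rows for $2\dxu-2\dxl-2\dxr+2\dxb$, $s(P)$, and $c(P)$ under $1a$, $w2a$, $w3a$), whose entries you rederive from Tables~\ref{t9} and \ref{t10} together with $\dxu(P)=\dxb(P)$ and the local behavior of the Seifert resolution, with non-triviality read off from the tables just as in the paper. The only difference is that you spell out the derivations the paper leaves implicit.
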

\begin{proof}
Let $1a$ be a single RI increasing double points.  
Table~\ref{t11} shows the increments for each local move and the invariances.
\begin{table}[h!]
\caption{Differences under $1a$, $w2a$, and $w3a$.}\label{t11}
\centering
\begin{tabular}{|c|c|c|c|} \hline
& $1a$ & $w2a$ & $w3a$  \\ \hline
$2\dxu(P)-2\dxl(P)-2\dxr(P)+2\dxb(P)$ & $0$ & $+2$ & $-2$  \\ \hline
$s(P)$ & $+1$ & $0$ & $0$ \\ \hline
$c(P)$ & $+1$ & $+2$ & $0$ \\ \hline
\end{tabular}
\end{table}
Non-trivialities are provided in Tables~\ref{tt1} and \ref{tt3}. 
\end{proof}

We have Proposition~\ref{prop1}.  
\begin{proposition}\label{prop1}
Let $P \sharp P'$ be the connected sum of the spherical curves $P$ and $P'$.  
$\kappa(P)=s(P)-c(P)$ and $\mu(P)$ $=$ $2\dxb(P)$ $-2\dxl(P)$ $-2\dxr(P)$ $+$ $2\dxu(P)$ $+$ $s(P)$ $-$ $c(P)$ satisfy the following properties.  
\begin{enumerate}
\item $\kappa(P \sharp_{(p_1, p_2)} P')$ $=$ $\kappa(P)$ $+$ $\kappa(P')$ $-1$. 
\item $\mu(P \sharp_{(p_1, p_2)} P')$ $=$ $\mu(P)$ $+$ $\mu(P')$ $-1$.   
\end{enumerate}
\end{proposition}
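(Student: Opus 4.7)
The plan is to reduce both claims to two elementary ingredients: the additivity of the arrow-counting quantities $\dxu, \dxl, \dxr, \dxb$ (already established in Theorem~\ref{thm1}) together with the additivity of $c$, and a single geometric observation about how $s$ behaves under connected sum.

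The key geometric claim I would establish first is
\[
s(P \sharp_{(p_1,p_2)} P') = s(P) + s(P') - 1.
\]
To see this, note that the connected sum is formed at regular points $p_i \in P_i$, so no new double points appear. Hence the Seifert resolution of $P \sharp_{(p_1,p_2)} P'$ is obtained by performing Seifert resolutions of $P$ and $P'$ separately and then gluing along the boundary circle $h$. Each point $p_i$ lies on exactly one Seifert circle $C_i$ of $S(P_i)$. The gluing identifies the two arcs $C_1 \cap \partial d_1$ and $C_2 \cap \partial d_2$, turning $C_1$ and $C_2$ into a single circle while leaving every other Seifert circle unchanged. That accounts for the $-1$.

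Given this, the first statement follows immediately from the additivity of $c$ (the double points of $P \sharp_{(p_1,p_2)} P'$ are the disjoint union of the double points of $P$ and $P'$):
\[
\kappa(P \sharp_{(p_1,p_2)} P') = \bigl(s(P) + s(P') - 1\bigr) - \bigl(c(P) + c(P')\bigr) = \kappa(P) + \kappa(P') - 1.
\]
For the second statement, I combine the same identity with the additivity of $\dxu, \dxl, \dxr, \dxb$ provided by Theorem~\ref{thm1}, so the term $2\dxb - 2\dxl - 2\dxr + 2\dxu$ splits cleanly as a sum over $P$ and $P'$, and the only non-additive contribution is the extra $-1$ coming from $\kappa$. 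This yields $\mu(P\sharp_{(p_1,p_2)}P') = \mu(P) + \mu(P') - 1$.

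The main (indeed the only) obstacle is the Seifert-circle count: one has to be careful that Seifert resolution is genuinely local at the double points and therefore commutes with the connected-sum gluing away from the double points, and that the two arcs meeting $\partial d_i$ really belong to the \emph{same} Seifert circle in each factor (which is automatic since $p_i$ is a single smooth point of the curve). Once this is pinned down, both identities are routine.
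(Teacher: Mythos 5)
Your proof is correct. The paper in fact states Proposition~\ref{prop1} with no accompanying proof, so there is nothing to compare against; your argument --- additivity of $c$ and of the arrow counts from Theorem~\ref{thm1}, together with the observation that Seifert resolution is local at the double points, so the two Seifert circles through $p_1$ and $p_2$ merge into a single circle and hence $s(P \sharp_{(p_1,p_2)} P') = s(P) + s(P') - 1$ --- is precisely the natural argument the statement presupposes, and it is sound.
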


\section{Tables}\label{secTable}
In this section, we obtain five tables of prime spherical curves without $1$-gons up to seven double points and the trivial spherical curve (i.e., the simple closed curve).  A spherical curve $P$ is said to be {\it{prime}} if $P$ cannot be represented as a connected sum of two non-trivial spherical curves.  We define $1$-gon as the boundary with exactly one vertex of a disk.  In the tables, symbol $c_m$,  for positive integers $c$ and $m$, indicates the image of a projection of a knot in the Rolfsen table.  To list all prime spherical curves up to seven double points, it is sufficient to consider any flype for every $c_m$ (cf.~Tait flyping conjecture).  Thus, spherical curves $7_A$ (from $7_6$), $7_B$ (from $7_7$), and $7_C$ (from $7_5$) should be added by flypes.  In Tables~\ref{tt1}--\ref{tt4}, every line indicates that a finite sequence consisting of finitely many RIs and a single Reidemeister move $M$ ($\neq$ RI) has been found, where $M$ is weak RI\!I\!I for Table~\ref{tt1}, strong RI\!I\!I for Table~\ref{tt2}, weak RI\!I for Table~\ref{tt3}, and strong RI\!I for Table~\ref{tt4}.  We can show that any line requires at least a single $M$.  The dotted arc in Table~\ref{tt1} (resp.~\ref{tt2}) indicates that there exists a finite sequence consisting of two RIs and two weak RI\!I\!Is (resp.~three strong RI\!I\!Is) \cite{ildt}.    
\begin{table}[h!]
\caption{Values of the invariants under RI and weak RI\!I\!I by Theorem~\ref{thm1} and Theorem~\ref{thm_w12} for prime spherical curves without $1$-gons up to seven double points.  Symbol ($k$) with integer $k$ indicates the value of the invariant of Theorem~\ref{thm_w12}.}\label{tt1}
\begin{center}
\includegraphics[width=12cm]{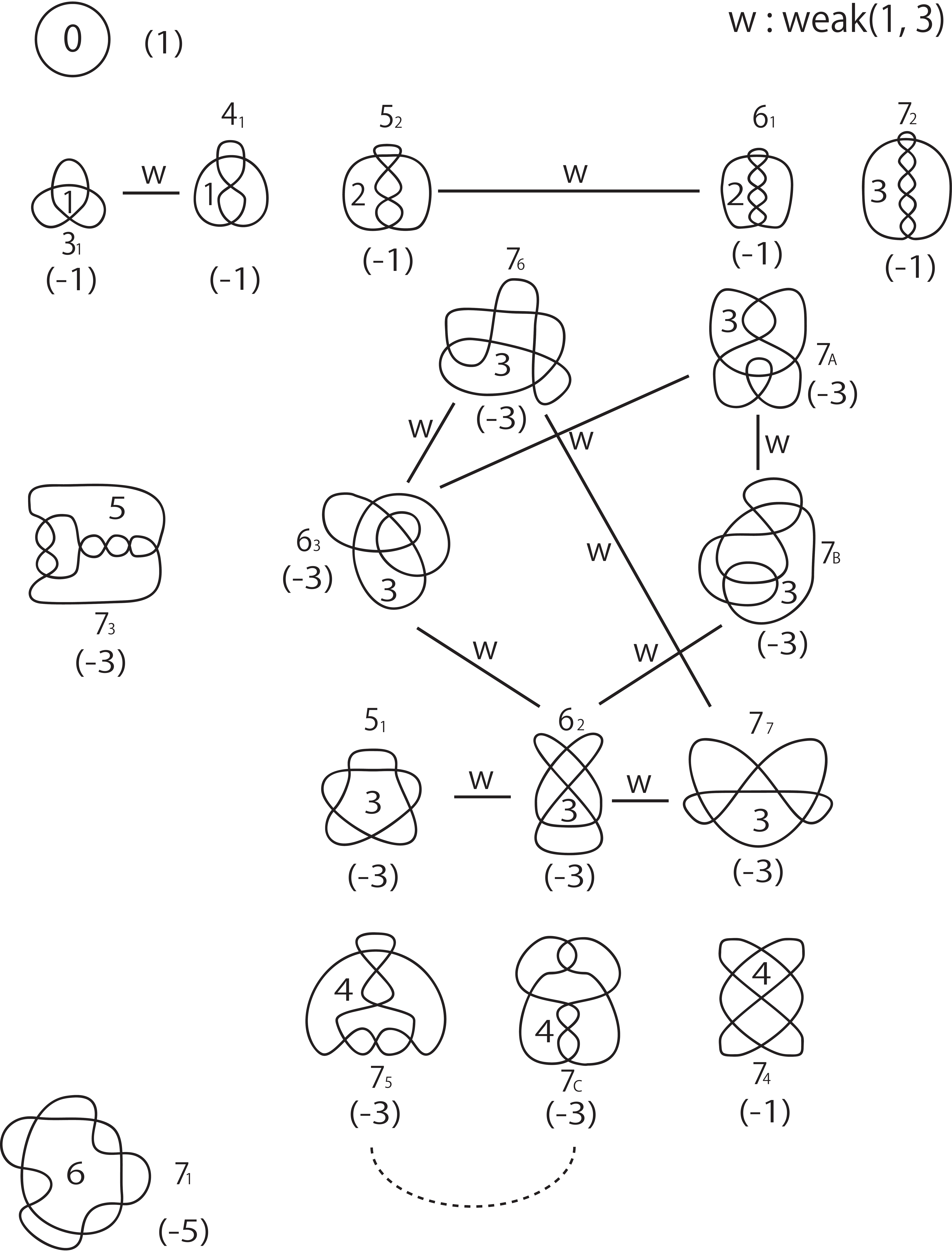}
\end{center}
\end{table}
\begin{table}[h!]
\caption{Values of the invariant under RI and strong RI\!I\!I by Theorem~\ref{thm1} for spherical curves without $1$-gons up to seven double points.  }\label{tt2}
\begin{center}
\includegraphics[width=12cm]{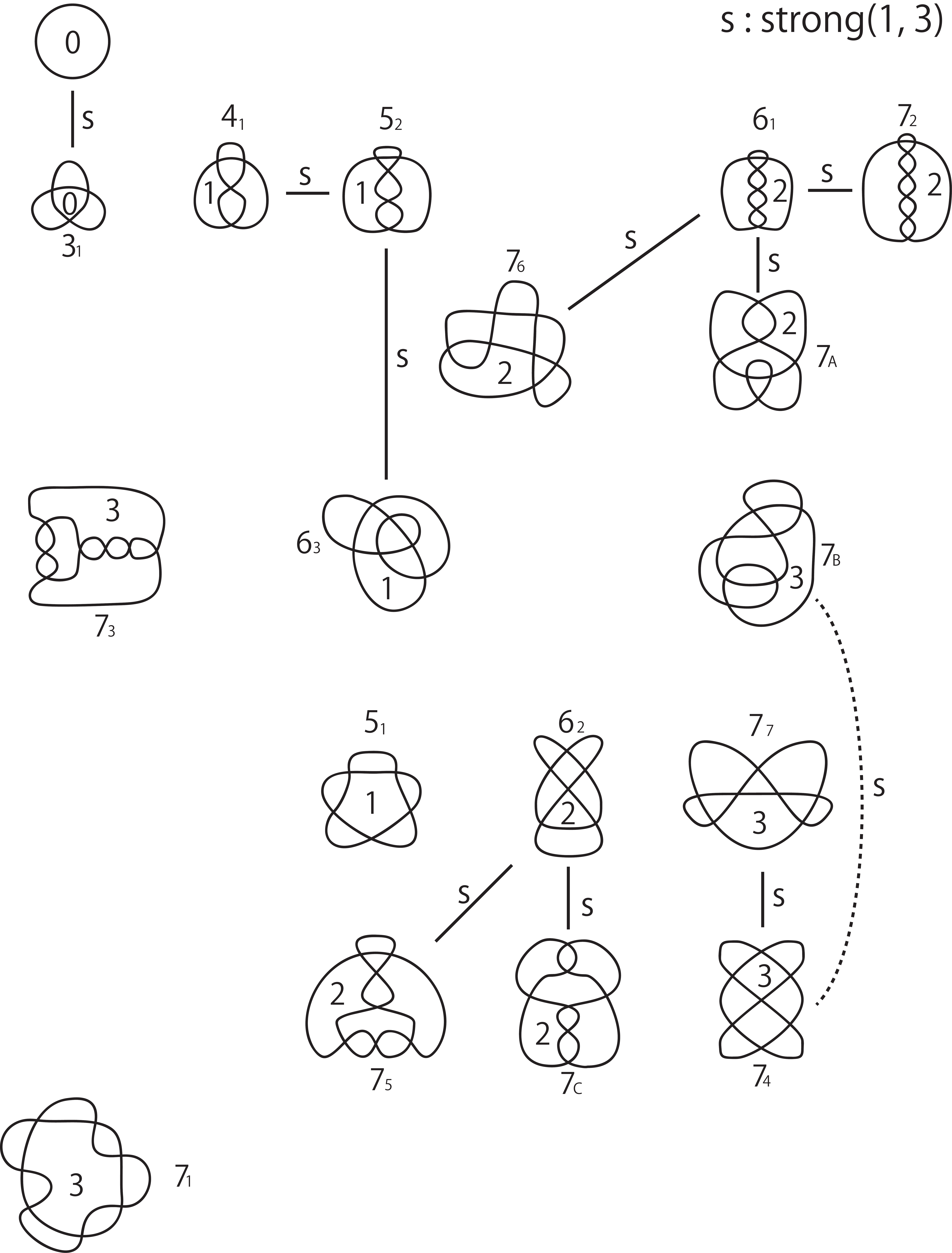}
\end{center}
\end{table}
\begin{table}[h!]
\caption{Values of the invariant under RI and weak RI\!I by Theorem~\ref{thm_w12} for prime spherical curves without $1$-gons up to seven double points.}\label{tt3}
\begin{center}
\includegraphics[width=12cm]{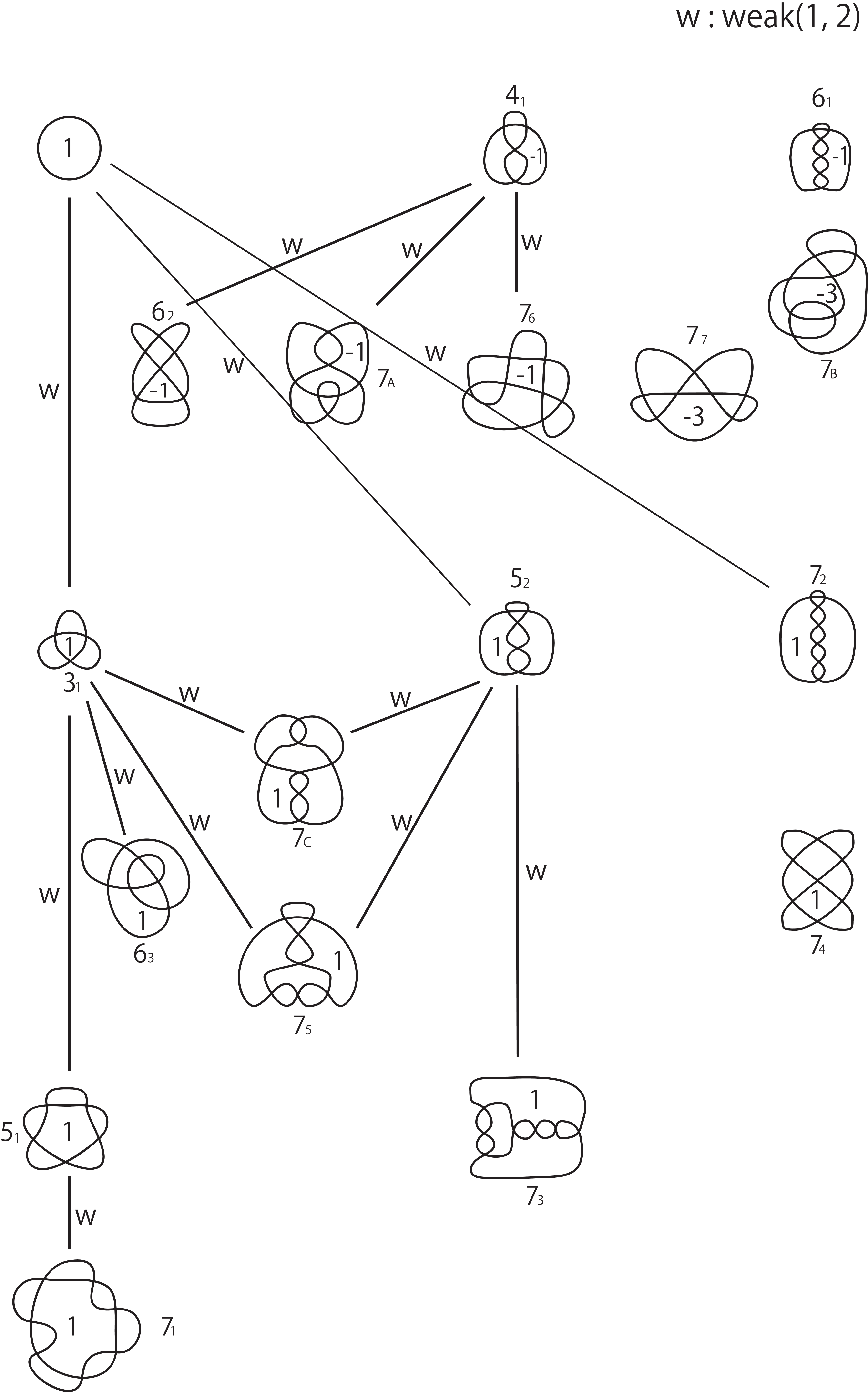}
\end{center}
\end{table}
\begin{table}[h!]
\caption{Values of the invariant under RI and strong RI\!I by Theorem~\ref{thm1} for prime spherical curves without $1$-gons up to seven double points.}\label{tt4}
\centering
\includegraphics[width=12cm]{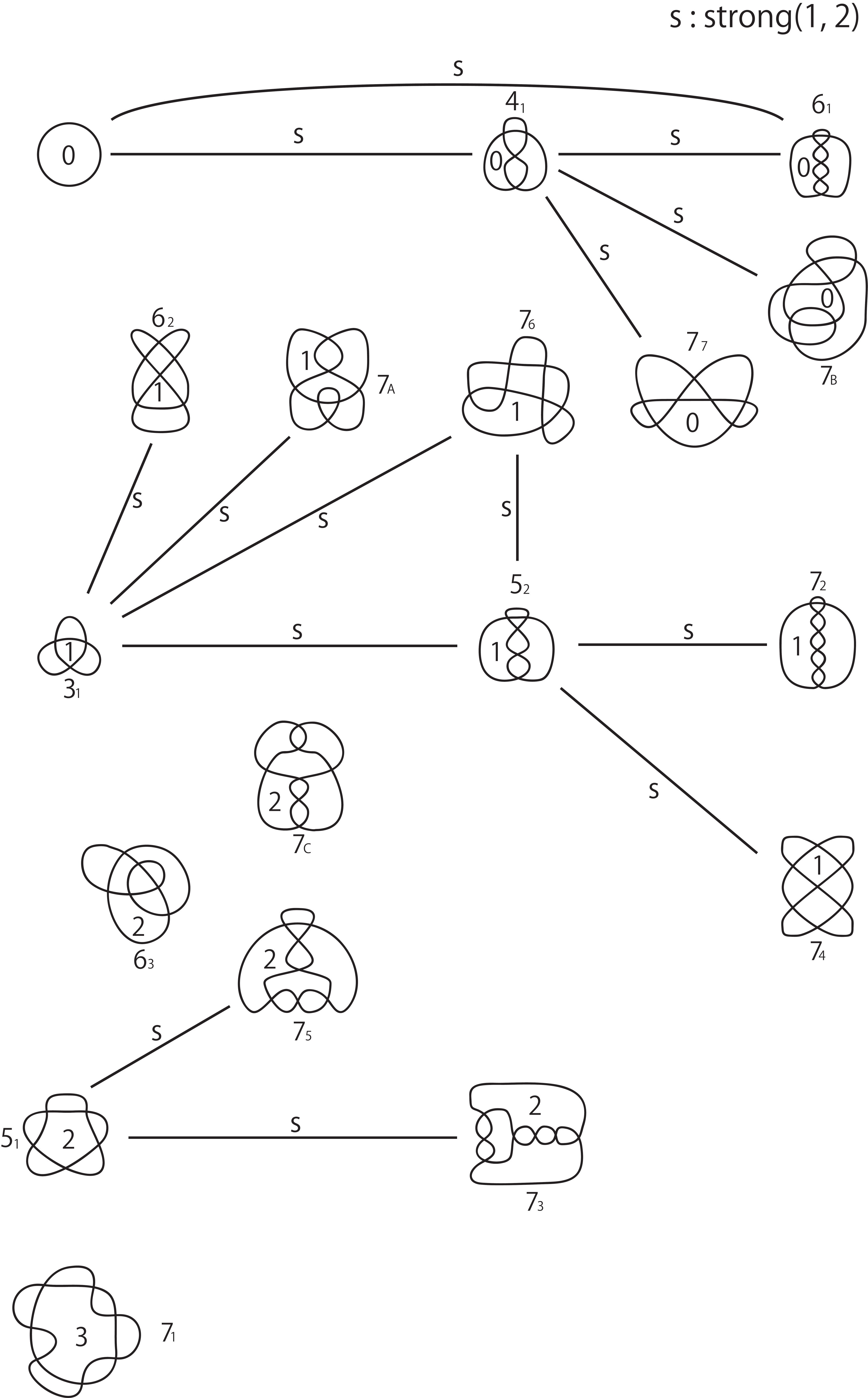}
\end{table}
\begin{table}[h!]
\caption{Values of base-point-free functions for a prime spherical curve $P$ without $1$-gons up to seven double points. }\label{table5}
\centering
\includegraphics[width=6.5cm]{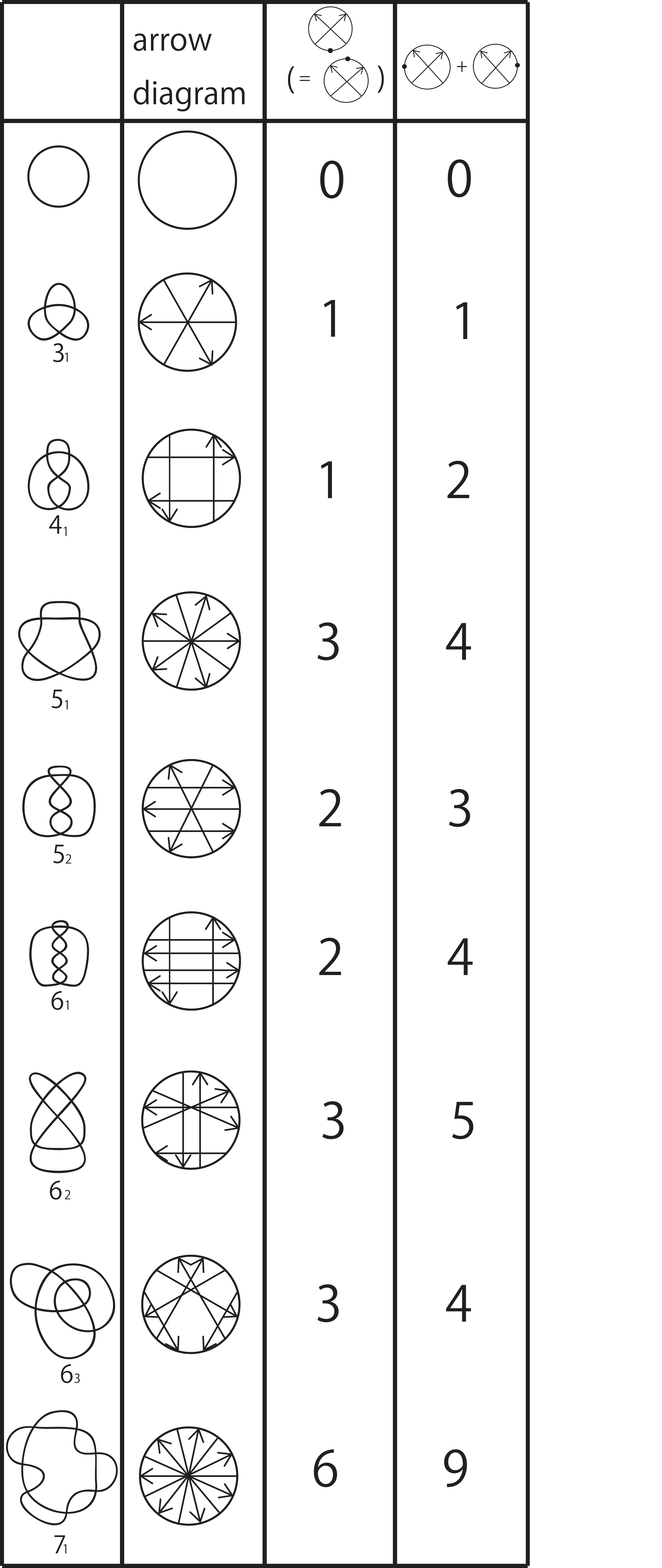}
\includegraphics[width=5.2cm]{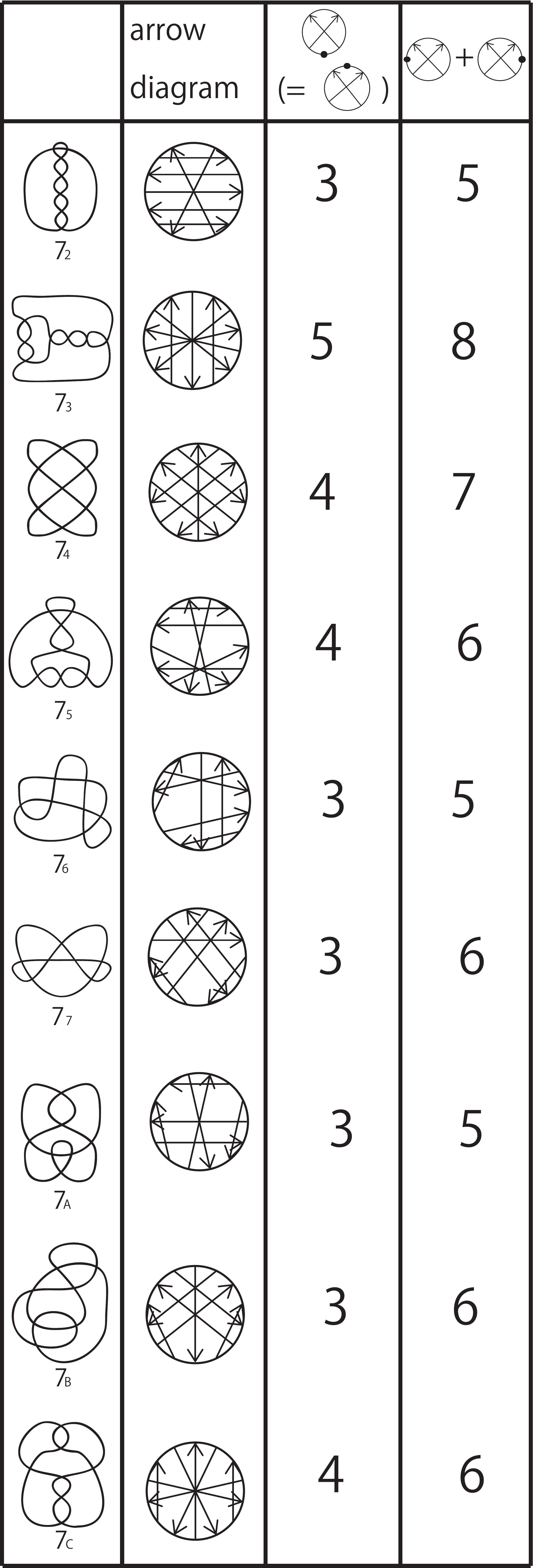}
\end{table}

\section*{Acknowledgements}
The author would like to thank Professor Kouki Taniyama for the fruitful discussions.  The author would also like to thank Mr.~Yusuke Takimura for the numerous discussions and providing the data to construct Tables~\ref{tt1}--\ref{table5}.    

\clearpage

\end{document}